\newcolumntype{P}[1]{>{\centering\arraybackslash}p{#1}}
\newtheorem{theorem}{Theorem}[section]
\newtheorem{proposition}[theorem]{Proposition}
\newtheorem{corollary}[theorem]{Corollary}
\newtheorem{lemma}[theorem]{Lemma}
\newtheorem{rem}[theorem]{Remark}
\newtheorem{example}[theorem]{Example}
\newtheorem{definition}[theorem]{Definition}
\newcommand\qed{{\hspace*{\fill}$\Box$\vskip12pt plus 1pt}}
\newenvironment{proof}{{\noindent\bf Proof.\ }}{\qed}
\newenvironment{remark}{\begin{rem}\em}{\end{rem}}
\newcommand{\Cc}{{\mathbb C}}
\newcommand{\Rr}{{\mathbb R}}
\newcommand{\Zz}{{\mathbb Z}}
\newcommand{\Pp}{{\mathbb P}}
\begin{document}
\markboth{Joseph Cummings and Jonathan D. Hauenstein}
{}

\title{Multi-graded Macaulay Dual Spaces}

\author{\footnotesize Joseph Cummings and Jonathan D. Hauenstein
\footnote{Both authors contributed equally and acknowledge support
from NSF CCF-1812746.}}


\maketitle

\begin{abstract}
  We describe an algorithm for computing Macaulay dual spaces for multi-graded ideals.
  For homogeneous ideals, the natural grading is inherited by the Macaulay dual space which has been leveraged to develop algorithms
  to compute the Macaulay dual space in each homogeneous degree.   Our main theoretical 
  result extends this idea to multi-graded Macaulay
  dual spaces inherited from multi-graded ideals. 
  This natural duality allows ideal operations to be translated from homogeneous ideals to
  their corresponding operations on the multi-graded Macaulay dual spaces.
  In particular, we describe a linear operator with a right inverse for computing
  quotients by a multi-graded polynomial.   
  By using a total ordering on the homogeneous components of the Macaulay dual space, 
  we also describe how to recursively construct a basis for each component. 
  Several examples are included to demonstrate this new approach.
\end{abstract}



\section{Introduction}

For a polynomial system $F \subseteq \Cc[x_1,\dotsc,x_N]$,
many algebraic properties of the ideal $I = \langle F \rangle$ generated by $F$ can,
for example, be deduced from a Gr{\"o}bner basis of $I$,
such as its Hilbert function.
In many instances, one often knows a generating set $F$
for an ideal~$I$, but computing a Gr{\"o}bner basis 
of $I$ could be computationally infeasible. 
From a generating set $F$, another approach
to compute information about the corresponding
ideal is to use Macaulay dual spaces 
which Macaulay formulated as 
inverse systems in \cite{MacaulayBook}
and have been utilized in a variety of scenarios
such as~\cite{NLDT,MultipleZeros,DZ05,GHPS14,
HaoWenrui2013A9Aa,DeflationMult,
LeykinNPD,MM11,ClosednessCondition,StetterBook,ST98,Mourrain-Telen-VanBarel,Batselier-Dreesen-DeMoor}.  
One particular application of interest here
is to compute Hilbert functions of
ideals up to a given degree which are graded by
a finitely generated abelian group~$M$,
called multi-graded~ideals.

Since multi-graded ideals are a generalization of homogeneous ideals, multi-graded Macaulay dual
spaces are a generalization of homogeneous Macaulay
dual spaces.  Moreover, multi-graded ideals naturally arise when considering multi-projective varieties, and more generally, subvarieties of a smooth toric variety. One key 
theoretical result is Thm.~\ref{thm:MhomDualBasis}
which states that the Macaulay dual space of a multi-graded ideal inherits the multi-grading from the ideal.  This is applied in Section~\ref{sec:IdealOperations} to ideal operations
with another key theoretical result being Thm.~\ref{thm: saturation} for
computing ideal quotients using Macaulay dual spaces.
For $\Zz^k$-gradings, an algorithm is described for 
computing each graded piece of the dual space sequentially up to a given degree. 
This is in contrast to other known Gr{\"o}bner basis techniques over semigroup algebras~\cite{Bender-Faugere-Tsigaridas,Faugere-Spaenlehauer-Svartz}.

The rest of the paper is organized as follows. 
Section~\ref{sec:Background} summarizes
necessary background regarding
multi-graded ideals and
Macaulay dual spaces.
Section~\ref{sec: multi-graded macaulay dual spaces} describes multi-graded Macaulay dual spaces which are used in ideal operations
in Section~\ref{sec:IdealOperations}.
Section~\ref{sec:Algorithm} provides an algorithm for computing
multi-graded Macaulay dual spaces and summarizes
a proof-of-concept implementation 
which is used in the examples presented in Section~\ref{sec:Examples}.
A short conclusion is provided in Section~\ref{sec:Conclusion}.

\section{Background}\label{sec:Background}

The following summarizes necessary background
information on multi-graded ideals
and Macaulay dual spaces.

\subsection{Multi-graded ideals}

The first step in defining a multi-graded ideal
is to have a multi-grading on a polynomial ring.

\begin{definition}\label{def: multigradings}
    Let $R = \Cc[x_1,\dotsc,x_N]$ 
    and $M$ be a finitely generated abelian group.
    Then, $R$ is said to be $M$-{\em graded}   
    if there is a direct sum decomposition of the form
    \[
        R = \bigoplus_{m\in M} R_m
    \]
    where $R_{m_1} \cdot R_{m_2} \subseteq R_{m_1 + m_2}$
    for all $m_1,m_2\in M$.
    Moreover, if $m \in M$ and $f \in R_m$, 
    then $f$ is said to be $M$-{\em homogeneous of degree} $m$, denoted $\deg(f) = m$.  
    Finally, if $I \subseteq R$ is an ideal,
    then $I$ is said to be $M$-{\em graded} if $I$
    is generated by $M$-homogeneous polynomials.
\end{definition}

If $\alpha \in (\Zz_{\geq 0})^N$, then consider
\begin{equation}\label{eq:Monomial}
    |\alpha| = \sum_{i=1}^N \alpha_i,
    \quad
    \alpha! = \prod_{i=1}^N \alpha_i!,
    \quad\hbox{and}\quad
    x^\alpha = \prod_{i=1}^N x_i^{\alpha_i}.
\end{equation}
Hence, the standard grading of $R$
is a $\Zz$-grading with
$$R_m = \mathrm{span}_\Cc \left\{x^\alpha~:~
|\alpha|=m, \alpha\in(\Zz_{\geq 0})^N\right\}.$$
The following is an example with a different grading.

\begin{example}\label{ex:Mgraded2vars}
Consider $R = \Cc[x_1,x_2]$ and $M = \Zz$
such that $\deg(x_1) = 2$ and $\deg(x_2) = 1$.
Then, for $m\in\Zz$, 
\begin{equation}\label{eq:Mgraded2vars}
    R_m = \mathrm{span}_\Cc \{x_1^a x_2^{m-2a}~:~m\geq 2a, a\geq \Zz_{\geq 0}\}.
\end{equation}
Hence, $f=x_1 - 3x_2^2 \in R_2$, i.e., $f$ is an $M$-homogeneous polynomial with $\deg(f) = 2$.
\end{example}

Such a construction used in this example
can be naturally generalized to define an $M$-grading
on $R$, namely select \mbox{$m_1,\dotsc,m_N \in M$} and assign $\deg(x_i) = m_i$. 
Thus, for any $\alpha \in (\Zz_{\geq 0})^N$,
\[
    \deg(x^\alpha) = \sum_{i=1}^N \alpha_i m_i.
\]
In particular, for $m\in M$, one has
\[
    R_m = \mathrm{span}_\Cc\left\{x^\alpha ~:~ \deg(x^\alpha) = m, \alpha \in (\Zz_{\geq 0})^N\right\}.
\]

\begin{remark}
If $M = \Zz$ and $\deg(x_1) = \cdots = \deg(x_N) = 1$,
then the $\Zz$-grading on $R$ is the standard grading.
\end{remark}

\begin{example}\label{ex:Hirzebruch}
    Let $M = \Zz^2$, $R = \Cc[x_1,x_2,x_3,x_4]$, and $r \in \Zz_{>0}$. Set 
    $$\begin{array}{lcl}
        \deg(x_1) = (1,-r),&
        \deg(x_2) = (0,1), \\
        \deg(x_3) = (1,0), &
        \deg(x_4) = (0,1).
    \end{array}$$
    This is the Cox ring of the $r^\text{th}$ Hirzebruch surface $\mathcal{H}_r$
    \cite[\S~5.2]{ToricVarietiesCLS}.
    Just as there is a correspondence between homogeneous ideals and projective varieties, there is a correspondence between $M$-graded ideals of $R$ and subvarieties of $\mathcal{H}_r$. 
\end{example}

For an $M$-graded ideal $I$, the multi-graded Hilbert function 
simply records information about the corresponding dimensions of homogeneous components of $R/I$  \cite[\S8.2]{MillerSturmfels}.
In order to have finite dimensions,
we will only consider $M$-gradings in the
remainder of this article such that
$\dim_\Cc(R_m) < \infty$ for all $m \in M$.
This is equivalent to $R_0 = \Cc$,
that is, every polynomial of degree $0$ is~constant.

\begin{definition}
Suppose that $R$ is $M$-graded and $I\subseteq R$ is an 
$M$-graded ideal.
The {\em multi-graded Hilbert function of $I$}
is the function \mbox{$H_{I}:M\rightarrow \Zz$} defined by
$$H_I(m) = \dim_\Cc(R_m) - \dim_\Cc(R_m\cap I).$$
\end{definition}

\begin{example}
Following the setup from Ex.~\ref{ex:Mgraded2vars}
with $I = \langle x_1-3x_2^2\rangle$, one can easily verify
that 
$$H_I(m) = \left\{\begin{array}{lr}
0 & m < 0,\\
1 & m\geq 0.
\end{array}\right.
$$
\end{example}

\begin{example}
An illustration of a grading which will not be considered
is $R = \Cc[x_1,x_2]$ and $M = \Zz$ with
$\deg(x_1) = 1$ and $\deg(x_2) = -1$.  
Thus, for example, $\deg(x_1^kx_2^k) = 0$ 
for any $k\in\Zz_{\geq0}$ so that $\dim_\Cc(R_0) = \infty$.
\end{example}



\subsection{Macaulay dual spaces}

Macaulay dual spaces are a modern form
of inverse systems studied by Macaulay~\cite{MacaulayBook}.
Let $R = \Cc[x_1,\dots,x_N]$, $\alpha\in(\Zz_{\geq0})^N$, 
and $y\in \Cc^N$.
Following \eqref{eq:Monomial}, consider 
the operator $\partial_\alpha[y]: R\rightarrow \Cc$ 
defined by
\[
    \partial_\alpha[y](g) = \left.\frac{1}{\alpha!} \frac{\partial^{|\alpha|}g}{\partial x_1^{\alpha_1} \dots \partial x_N^{\alpha_N}}\right|_{x = y}.
\]
When the context is clear, we will write 
$\partial_\alpha$ instead of $\partial_\alpha[y]$.

\begin{example}
For $R = \Cc[x_1,x_2]$, $\alpha = (3,2)$ and $y = (1,2)$.
Then, 
$$\partial_\alpha[y](x_1^4x_2^3 + 3x_1^3x_2^2 - 2x_1^2 + 3x_2 - 1) = 
\left.\frac{144x_1x_2 + 36}{3! 2!} 
\right|_{x=(1,2)} = 27.$$
In particular, $27$ is the coefficient of $(x_1-1)^3(x_2-2)^2$ 
in a Taylor series expansion of 
$x_1^4x_2^3 + 3x_1^3x_2^2 - 2x_1^2 + 3x_2 - 1$
centered at $y = (1,2)$.
\end{example}

The Macaulay dual space is a $\Cc$-vector space
contained inside of 
$$D_y = \mathrm{span}_\Cc\left\{\partial_\alpha[y]~:~
\alpha\in(\Zz_{\geq0})^N\right\}.$$

\begin{definition}
Let $I\subseteq R$ be an ideal
and $y\in\Cc^N$.
The {\em Macaulay dual space of $I$ at $y$} is
the $\Cc$-vector space 
\[
        D_y(I) = \{ \partial \in D_y ~:~ \partial(g) = 0 \text{ for all } g \in I\}.
\]
\end{definition}

If the dimension of $D_y(I)$ is finite,
then $\dim_{\Cc} D_y(I)$ is the multiplicity
of $y$ with respect to $I$.  
If the dimension of $D_y(I)$ is infinite,
then $y$ is a nonisolated solution 
in $\Cc^N$ to the simultaneous solution
set of $g = 0$ for all $g\in I$.
This fact was exploited in \cite{NLDT}
to develop a numerical local dimension test.

\begin{example}\label{ex:GO}
Let $R = \Cc[x_1,x_2]$
and $I = \langle 29/16 x_1^3 - 2x_1x_2, x_2 - x_1^2\rangle$
arising from the Griewank-Osborne system \cite{GOsystem}.
It is well-known that $y=(0,0)$ has multiplicity $3$
with respect to $I$ and one can easily verify that
\begin{equation}\label{eq:GObasis}
D_0(I) = \mathrm{span}_\Cc \left\{
\partial_{(0,0)}, 
\partial_{(1,0)}, 
\partial_{(0,1)}+\partial_{(2,0)}
\right\}
\end{equation}
is a $3$-dimensional vector space.
\end{example}

From \cite{StetterBook}, for $i=1,\dots,N$,
there are linear anti-differentiation operators \mbox{$\Phi_i : D_y \to D_y$} which are defined via
\begin{equation}\label{eq:Phi}
    \Phi_i(\partial_\alpha) = \begin{cases} \partial_{\alpha - e_i} & \text{if } \alpha_i > 0 \\ 0 &\text{otherwise} \end{cases}
\end{equation}
where $e_i$ is the $i^\text{th}$ standard basis vector. 
From the Leibniz rule, one can easily verify that,
for any $f\in R$ and $\partial \in D_y$,
\begin{equation}\label{eq:Leibniz}
\Phi_i(\partial)(f) = \partial((x_i-y_i) f)
\end{equation}
The following, from \cite{StetterBook,ClosednessCondition},
uses these linear operators
to compute $D_y(I)$
via the so-called {\em closedness subspace condition}
which has been exploited to improve the efficiency
of computing dual spaces~\cite{MultipleZeros,HaoWenrui2013A9Aa}.

\begin{proposition}\label{prop:closedness condition}
    Let $I = \langle f_1,\dotsc,f_t\rangle \subseteq \Cc[x_1,\dotsc,x_N]$, $y \in \Cc^N$, and $\partial \in D_y$. Then, $\partial \in D_y(I)$ if and only if $\partial(f_i) = 0$ for all $1 \leq i \leq t$ and $\Phi_j(\partial) \in D_y(I)$ for all $1 \leq j \leq N$.
\end{proposition}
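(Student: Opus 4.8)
The plan is to prove both directions of the equivalence using the characterization of $D_y(I)$ as the set of functionals in $D_y$ annihilating every element of the ideal $I = \langle f_1,\dotsc,f_t\rangle$, together with the Leibniz identity \eqref{eq:Leibniz}. The key structural fact I want to exploit is that every element of $I$ can be written as $g = \sum_{i=1}^t h_i f_i$ for some $h_i \in R$, and that each $h_i$ is a $\Cc$-linear combination of monomials $x^\beta$, which in turn can be built up from constants by repeated multiplication by the factors $(x_j - y_j)$. Identity \eqref{eq:Leibniz} says precisely that multiplying the argument by $(x_j - y_j)$ on the polynomial side corresponds to applying $\Phi_j$ on the dual side, so iterating it gives $\partial\bigl((x-y)^\beta f\bigr) = \bigl(\Phi^\beta \partial\bigr)(f)$ where $\Phi^\beta$ denotes the composition $\Phi_1^{\beta_1}\circ\cdots\circ\Phi_N^{\beta_N}$.

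For the forward direction, suppose $\partial \in D_y(I)$. Then $\partial(f_i) = 0$ for each $i$ since $f_i \in I$. Moreover, for any $j$ and any $g \in I$ we have $(x_j - y_j)g \in I$, so by \eqref{eq:Leibniz}, $\Phi_j(\partial)(g) = \partial\bigl((x_j - y_j)g\bigr) = 0$; since $g \in I$ was arbitrary and $\Phi_j(\partial) \in D_y$, this shows $\Phi_j(\partial) \in D_y(I)$. For the reverse direction, assume $\partial(f_i) = 0$ for all $i$ and $\Phi_j(\partial) \in D_y(I)$ for all $j$. The first step is to observe that the hypothesis "$\Phi_j(\partial) \in D_y(I)$ for all $j$" can be iterated: applying it to $\Phi_j(\partial)$ in place of $\partial$ is legitimate because $\Phi_j(\partial) \in D_y(I)$ means it satisfies the same annihilation property, so we get $\Phi_k\Phi_j(\partial) \in D_y(I)$ for all $j,k$, and inductively $\Phi^\beta \partial \in D_y(I)$ for every $\beta \in (\Zz_{\geq 0})^N$. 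In particular $\bigl(\Phi^\beta \partial\bigr)(f_i) = 0$ for all $\beta$ and all $i$. Now take an arbitrary $g \in I$, write $g = \sum_i h_i f_i$, and expand each $h_i$ in the shifted monomial basis as $h_i = \sum_\beta c_{i,\beta}(x-y)^\beta$; then by linearity and the iterated form of \eqref{eq:Leibniz},
\[
\partial(g) = \sum_i \sum_\beta c_{i,\beta}\,\partial\bigl((x-y)^\beta f_i\bigr) = \sum_i\sum_\beta c_{i,\beta}\,\bigl(\Phi^\beta\partial\bigr)(f_i) = 0,
\]
so $\partial \in D_y(I)$.

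The main subtlety — and the step I would be most careful about — is the iteration in the reverse direction: one must be sure that the hypothesis is strong enough to bootstrap, i.e. that from "$\Phi_j(\partial) \in D_y(I)$ for all $j$" one really does get $\Phi^\beta\partial \in D_y(I)$ for all multi-indices $\beta$. This works because membership in $D_y(I)$ is itself a closed condition under each $\Phi_j$ once we know the statement holds for every functional in $D_y(I)$, but logically the cleanest route is a direct induction on $|\beta|$: the base case $|\beta| = 0$ is the hypothesis $\partial$ annihilates each $f_i$ only after we also know $\partial$ kills all of $I$, so instead I would phrase the induction as proving the finitely-many identities $\bigl(\Phi^\beta\partial\bigr)(f_i)=0$ directly, using that $\Phi^\beta\partial = \Phi^{\beta'}\bigl(\Phi_j(\partial)\bigr)$ for a suitable $j$ with $\beta = \beta' + e_j$, and that $\Phi_j(\partial)\in D_y(I)$ together with the forward direction applied to $\Phi_j(\partial)$ gives $\Phi^{\beta'}\bigl(\Phi_j(\partial)\bigr)\in D_y(I)$, hence it annihilates $f_i$. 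A secondary routine point is checking that $\Phi^\beta\partial$ is well-defined independent of the order of composition, which is immediate from the formula \eqref{eq:Phi} since the $\Phi_j$ commute. Everything else is bookkeeping with the Leibniz identity.
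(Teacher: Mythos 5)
Your proof is correct. Note that the paper does not actually prove Prop.~\ref{prop:closedness condition}; it is quoted from the cited references on the closedness subspace condition, so there is no in-paper argument to compare against. Your two-direction argument is the standard one and is sound: the forward direction is immediate from \eqref{eq:Leibniz} and the fact that $I$ is an ideal, and in the reverse direction you correctly identify and resolve the one delicate point, namely that the iteration $\Phi^\beta\partial\in D_y(I)$ for $|\beta|\geq 1$ must be bootstrapped by applying the already-established forward direction to $\Phi_j(\partial)\in D_y(I)$ (rather than to $\partial$ itself, which would be circular), after which expanding $g=\sum_i h_if_i$ with each $h_i$ written in the shifted basis $(x-y)^\beta$ finishes the argument.
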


One key aspect of this closedness condition is that
any basis for the ideal $I$ can be utilized.  

\begin{example}\label{ex:GO2}
Continuing with Ex.~\ref{ex:GO}
with $f_1 = 29/16 x_1^3 - 2x_1x_2$
and $f_2 = x_2-x_1^2$,
consider $\delta = \partial_{(0,1)} + \partial_{(2,0)}$.
Clearly, $\delta(f_1) = 0$
since the monomials $x_1^2$ and $x_2$ do not appear $f_1$.
Next, it is easy to verify that $\delta(f_2) = 1 - 1 = 0$.
Finally, $\Phi_1(\delta) = \partial_{(1,0)}$
and $\Phi_2(\delta) = \partial_{(0,0)}$.
Hence, given that $\partial_{(0,0)}, \partial_{(1,0)} \in D_0(I)$,
Prop.~\ref{prop:closedness condition} allows one to conclude
that $\delta \in D_0(I)$.
\end{example}


\section{Multi-Graded Macaulay Dual Spaces}\label{sec: multi-graded macaulay dual spaces}

For a multi-graded ideal $I\subseteq R=\Cc[x_1,\dots,x_N]$, 
one can consider investigating 
the Macaulay dual space at $y=0\in\Cc^N$ 
to determine properties about $I$.  
The following shows that multi-gradedness of $I$
extends to $D_0(I)$.  

Suppose that $R$ is $M$-graded where the $M$-grading
is induced by assigning $\deg(x_i) = m_i \in M$
such that $\dim_{\Cc}(R_0) = 1$.  In particular, after selecting
a basis of $M$, say $\beta = \{\beta_1,\dots,\beta_k\}$,
one can express each $m_i$ in terms of $\beta$.  
Let $A$ be the $k\times N$ matrix whose $i^{\rm th}$
column corresponds with $m_i$ in terms of $\beta$.
Hence, for any $m\in R$, a basis of monomials for 
$R_m$ is 
$$\{x^\alpha~:~A\cdot \alpha = m, \alpha \in \Zz_{\geq 0}^N\}.$$
In particular, $\dim_\Cc(R_0) = 1$ is equivalent to
$\mathrm{null}~A \cap \Zz_{\geq 0}^N = \{0\}$.

\begin{example}\label{ex:Hirzebruch2}
With the setup from Ex.~\ref{ex:Hirzebruch},
using a standard basis $\beta = \{e_1,e_2\}$ for $M = \Zz^2$,
one has
$$A = \begin{pmatrix} 1 & 0 & 1 & 0 \\
-r & 1 & 0 & 1 \end{pmatrix}
\,\,\,\,\,\,\hbox{and}\,\,\,\,\,\,
N = \begin{pmatrix} 1 & 0 \\ r & -1 \\ -1 & 0 \\ 0 & 1 \end{pmatrix}
$$
where the columns of $N$ span
$\mathrm{null}~A$.
It is clear from the first and third rows of $N$ that $\mathrm{null}~A \cap \Zz_{\geq 0}^4 = \{0\}$.
\end{example}

One can extend the $M$-grading to $D_0$, namely,
for each $m\in M$,
$$D_0^m = \mathrm{span}_\Cc \left\{\partial_\alpha[0]~:~
A\cdot \alpha = m, \alpha \in \Zz_{\geq 0}^N\right\}.$$
Hence, there is a direct sum decomposition of the form
\begin{equation}\label{eq:D0decomp}
D_0 =     
        \bigoplus_{m\in M} D_0^m.
\end{equation}

The following is the key theoretical result
that $D_0(I)$ inherits the multi-grading from $I$.

\begin{theorem}\label{thm:MhomDualBasis}
Suppose that $I\subseteq R$ is an $M$-graded ideal.
Then, the Macaulay dual space $D_0(I)$ is also $M$-graded,
that is,
$$D_0(I) = \bigoplus_{m \in M} D_0^m(I)$$ 
where $D_0^m(I) = D_0^m \cap D_0(I)$.
\end{theorem}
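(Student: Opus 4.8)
The plan is to exploit the apolarity pairing between $D_0$ and $R$, under which the basis $\{\partial_\alpha[0]\}_\alpha$ is dual to the monomial basis $\{x^\alpha\}_\alpha$, together with the standard fact that an ideal generated by $M$-homogeneous elements is an honest graded submodule of $R$. First I would record the orthogonality that drives everything: a direct computation from the definition of $\partial_\alpha[0]$ shows $\partial_\alpha[0](x^\beta) = 1$ if $\alpha = \beta$ and $0$ otherwise. Consequently, if $\partial \in D_0^m$ and $g \in R_{m'}$ with $m \neq m'$, then $\partial(g) = 0$: writing $\partial$ as a $\Cc$-combination of the $\partial_\alpha[0]$ with $A\cdot\alpha = m$ and $g$ as a $\Cc$-combination of the $x^\beta$ with $A\cdot\beta = m'$, the pairing of a term $\partial_\alpha[0]$ against a term $x^\beta$ is nonzero only when $\alpha = \beta$, which would force $m = A\cdot\alpha = A\cdot\beta = m'$.

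Next I would recall why $I$ being $M$-graded (i.e.\ generated by $M$-homogeneous polynomials) implies $I = \bigoplus_{m\in M}(I\cap R_m)$: given $g \in I$, write $g = \sum_j f_j p_j$ with each $f_j$ an $M$-homogeneous generator, expand each $p_j$ into its $M$-homogeneous components, and collect terms of equal degree; this exhibits every $M$-homogeneous component of $g$ as an element of $I$. With this in hand the theorem follows quickly. The inclusion $\sum_m D_0^m(I) \subseteq D_0(I)$ is immediate, and the sum is direct since it is a subsum of the direct sum \eqref{eq:D0decomp}, so it remains only to prove $D_0(I) \subseteq \bigoplus_m D_0^m(I)$. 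Given $\partial \in D_0(I)$, decompose $\partial = \sum_m \partial_m$ with $\partial_m \in D_0^m$ using \eqref{eq:D0decomp} (a finite sum); I claim each $\partial_m$ lies in $D_0(I)$, i.e.\ $\partial_m(g) = 0$ for every $g \in I$. Decomposing $g = \sum_{m'} g_{m'}$ with $g_{m'} \in I\cap R_{m'}$, the orthogonality above gives $\partial_m(g) = \partial_m(g_m)$ and likewise $\partial_m(g_m) = \partial(g_m)$; since $g_m \in I$ and $\partial \in D_0(I)$, this equals $0$. Hence $\partial_m \in D_0^m \cap D_0(I) = D_0^m(I)$ and $\partial \in \bigoplus_m D_0^m(I)$, which completes the argument.

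The step requiring the most care — and the real crux — is the reduction of the hypothesis ``$I$ generated by $M$-homogeneous polynomials'' to ``$I = \bigoplus_m (I\cap R_m)$'', since without it the $M$-homogeneous components $g_m$ of an element $g \in I$ need not themselves lie in $I$, and the final step breaks. Once that is settled, the orthogonality of $D_0^m$ against $R_{m'}$ for $m \neq m'$ reduces everything to bookkeeping; the only point to watch is that every element of $R$ and of $D_0$ has finitely many nonzero $M$-homogeneous components, so all sums manipulated above are finite. An alternative route would use Prop.~\ref{prop:closedness condition} together with the observation that $\Phi_i$ maps $D_0^m$ into $D_0^{m-m_i}$, but the pairing argument seems the most direct.
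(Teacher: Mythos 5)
Your proof is correct and follows essentially the same route as the paper's: decompose $\partial\in D_0(I)$ into its $M$-homogeneous pieces $\partial_m$, decompose $g\in I$ into homogeneous pieces $g_m\in I\cap R_m$, and use the orthogonality of $D_0^m$ against $R_{m'}$ for $m\neq m'$ to get $\partial_m(g)=\partial_m(g_m)=\partial(g_m)=0$. The only difference is that you spell out two facts the paper takes for granted (the pairing $\partial_\alpha[0](x^\beta)=\delta_{\alpha\beta}$ and the fact that an ideal generated by $M$-homogeneous elements satisfies $I=\bigoplus_m(I\cap R_m)$), which is a welcome amount of extra care rather than a deviation.
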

\begin{proof}
Suppose that $\partial \in D_0(I)$. 
Thus, from \eqref{eq:D0decomp}, one can write
$$\partial = \sum_{m\in M} \partial_m$$
where each $\partial_m \in D_0^m$.
The result follows by showing $\partial_m \in D_0(I)$
for all $m\in M$.  To that end, let $g\in I$.
Since $I$ is $M$-graded, one has
$$g = \sum_{m\in M} g_m$$
where each $g_m \in R_m\cap I$.  We claim that,
for any $m\in M$,
\[
    \partial_m(g) = \partial_m(g_m) = \partial(g_m) = 0.
\]
The first equality follows from 
$\partial_m$ being a linear operator
such that $\partial_m(p) = 0$ 
for any $p\in R_q$ for $q\neq m$.
Similarly, the second equality follows from 
linearity along with $\delta(g_m) = 0$ 
for any $\delta \in D_q$ for $q\neq m$.
The last equality follows from 
$g_m\in I$ and $\partial \in D_0(I)$.  
The result now follows since both 
$g\in I$ and $m\in M$ were arbitrary.  
\end{proof}

\begin{example}\label{ex:GO3}
Continuing with the setup from Ex.~\ref{ex:GO},
one can view $I$ as $M$-graded by taking $M = \Zz$
such that $\deg(x_i) = i$, i.e., 
$$A = \begin{pmatrix}
    1 & 2
\end{pmatrix}.$$
Hence, one can interpret 
\eqref{eq:GObasis} as 
$$D_0(I) = D_0^0(I) \oplus D_0^1(I) \oplus D_0^2(I)$$
where 
\begin{equation}\label{eq:GObasis2}
\begin{array}{c}
D_0^0(I) = \mathrm{span}_\Cc\{\partial_{(0,0)}\},
D_0^1(I) = \mathrm{span}_\Cc\{\partial_{(1,0)}\}, \hbox{~and~} \\
D_0^2(I) = \mathrm{span}_\Cc\{\partial_{(0,1)} + \partial_{(2,0)}\}.
\end{array}
\end{equation}
\end{example}

Adapting, for example, the proof of 
\cite[Thm.~3.2]{GHPS14}, the multi-graded Hilbert function 
is simply the dimension of the corresponding
Macaulay dual space.

\begin{proposition}
Suppose that $R$ is $M$-graded and $I\subseteq R$
is an $M$-graded ideal.  Then, for all $m\in M$,
$$H_I(m) = \dim_\Cc(D_0^m(I)).$$
\end{proposition}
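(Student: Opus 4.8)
The plan is to show that the natural duality pairing between $R_m$ and $D_0^m$ is perfect, and that under this pairing $D_0^m(I)$ is precisely the annihilator of $R_m \cap I$. First I would fix $m \in M$ and record that $R_m$ has the monomial basis $\{x^\alpha : A\cdot\alpha = m\}$ while $D_0^m$ has the dual basis $\{\partial_\alpha[0] : A\cdot\alpha = m\}$; indeed $\partial_\alpha[0](x^\gamma) = \delta_{\alpha\gamma}$ for monomials of the same degree, so the pairing $D_0^m \times R_m \to \Cc$, $(\partial, f) \mapsto \partial(f)$, is nondegenerate on both sides, giving $\dim_\Cc D_0^m = \dim_\Cc R_m < \infty$ (finiteness coming from the standing hypothesis $\dim_\Cc R_0 = 1$, i.e. $\mathrm{null}\,A \cap \Zz_{\geq 0}^N = \{0\}$, which makes each graded piece finite-dimensional).

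Next I would identify $D_0^m(I)$ with the annihilator $(R_m \cap I)^\perp$ inside $D_0^m$. By definition $D_0^m(I) = D_0^m \cap D_0(I)$, so a $\partial \in D_0^m$ lies in $D_0^m(I)$ iff $\partial(g) = 0$ for all $g \in I$. The key point is that for $\partial \in D_0^m$ one has $\partial(g) = \partial(g_m)$ where $g_m$ is the degree-$m$ component of $g$ (same bilinearity-and-grading argument used in the proof of Thm.~\ref{thm:MhomDualBasis}), and since $I$ is $M$-graded, $g_m$ ranges over all of $R_m \cap I$ as $g$ ranges over $I$. Hence $\partial \in D_0^m(I)$ iff $\partial$ annihilates $R_m \cap I$, i.e. $D_0^m(I) = (R_m \cap I)^\perp$.

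Finally I would invoke the standard linear-algebra fact that for a perfect pairing of finite-dimensional spaces $V \times W \to \Cc$ and a subspace $U \subseteq W$, one has $\dim_\Cc U^\perp = \dim_\Cc V - \dim_\Cc U$. Applying this with $V = D_0^m$, $W = R_m$, and $U = R_m \cap I$ yields
\[
    \dim_\Cc D_0^m(I) = \dim_\Cc D_0^m - \dim_\Cc(R_m \cap I) = \dim_\Cc R_m - \dim_\Cc(R_m \cap I) = H_I(m),
\]
the last equality being the definition of the multi-graded Hilbert function. I expect the only real subtlety to be the bookkeeping that makes $D_0^m(I)$ equal the annihilator of $R_m \cap I$ rather than of some larger or smaller space — one must use $M$-gradedness of $I$ in both directions (every homogeneous component of an element of $I$ lies in $I$, and conversely every element of $R_m \cap I$ is the degree-$m$ part of something in $I$, namely itself). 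Everything else is a routine transcription of the cited argument from \cite[Thm.~3.2]{GHPS14} to the multi-graded setting, so no step should present a genuine obstacle.
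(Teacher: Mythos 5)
Your proof is correct and is essentially the argument the paper intends: the paper gives no written proof, only citing an adaptation of \cite[Thm.~3.2]{GHPS14}, and that adaptation is precisely your perfect-pairing/annihilator computation, with the $M$-gradedness of $I$ used exactly as you describe to identify $D_0^m(I)$ with $(R_m\cap I)^\perp$. No gaps; the finiteness of $\dim_\Cc R_m$ needed for the annihilator dimension count is correctly traced back to the standing hypothesis $R_0=\Cc$.
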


\begin{example}\label{ex:GO4}
From Ex.~\ref{ex:GO3}, one has $H_I(0) = H_I(1) = H_I(2) = 1$
and otherwise equal to $0$.
\end{example}

With the $M$-grading on $D_0$, one can view
the anti-differentiation operators $\Phi_i$ 
in \eqref{eq:Phi} as operators 
from $D_0^m$ to $D_0^{m-A\cdot e_i}$
and refine the closedness subspace condition 
in Prop.~\ref{prop:closedness condition}
to~the~multi-graded~case.

\begin{corollary}\label{cor: dSpace membership}
Suppose that $I=\langle f_1,\dots,f_t\rangle \subseteq R$ is an $M$-graded ideal where each $f_i$ is $M$-homogeneous.  
For each $m\in M$, let
$$C_0^m(I) = \left\{\partial \in D_0^m ~:~ \Phi_i(\partial) \in D_0^{m - Ae_i}(I) \text{ for }i = 1,\dotsc,N\right\}.
$$
be the closedness subspace of degree $m$.
Then,
\[
    D_0^m(I) = \left\{\partial \in C_0^m(I) ~:~ \partial(f_i) = 0 \text{ for all } i \text{ such that }\deg(f_i) = m\right\}.
\]
\end{corollary}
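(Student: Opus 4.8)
The plan is to combine the (non-graded) closedness condition of Prop.~\ref{prop:closedness condition} with the grading result of Thm.~\ref{thm:MhomDualBasis}, being careful about one subtlety: the ideal $I$ has generators $f_1,\dots,f_t$, but within a fixed degree $m$ only those $f_i$ with $\deg(f_i)=m$ can contribute a nonzero linear condition on $D_0^m$, since $\partial(f_i)=0$ automatically whenever $\partial\in D_0^m$ and $\deg(f_i)\neq m$ (as $\partial$ annihilates every homogeneous component of degree $\neq m$). So the equality to be proved is really just the degree-$m$ slice of Prop.~\ref{prop:closedness condition}, with the redundant conditions dropped.

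\emph{Forward direction.} Suppose $\partial\in D_0^m(I)$. Then $\partial\in D_0(I)$, so by Prop.~\ref{prop:closedness condition} we get $\partial(f_i)=0$ for all $i$ (in particular for those with $\deg(f_i)=m$) and $\Phi_j(\partial)\in D_0(I)$ for all $j$. Since $\Phi_j$ maps $D_0^m$ into $D_0^{m-Ae_j}$ (as noted just before the corollary), we have $\Phi_j(\partial)\in D_0^{m-Ae_j}\cap D_0(I)=D_0^{m-Ae_j}(I)$ using Thm.~\ref{thm:MhomDualBasis}. Hence $\partial\in C_0^m(I)$ and $\partial$ satisfies the stated annihilation conditions.

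\emph{Reverse direction.} Suppose $\partial\in C_0^m(I)$ and $\partial(f_i)=0$ for every $i$ with $\deg(f_i)=m$. First I would upgrade the second hypothesis to all $i$: if $\deg(f_i)=m'\neq m$, write $\partial$ as a $\Cc$-combination of $\partial_\alpha[0]$ with $A\alpha=m$; since $\partial_\alpha[0](x^\beta)=0$ unless $\beta=\alpha$, and $f_i$ is supported on monomials $x^\beta$ with $A\beta=m'\neq m$, we get $\partial(f_i)=0$. So $\partial(f_i)=0$ for all $i$. The definition of $C_0^m(I)$ gives $\Phi_j(\partial)\in D_0^{m-Ae_j}(I)\subseteq D_0(I)$ for all $j$. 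Now apply Prop.~\ref{prop:closedness condition} in the other direction to conclude $\partial\in D_0(I)$; combined with $\partial\in D_0^m$ this gives $\partial\in D_0^m(I)$.

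\emph{Main obstacle.} There is no serious obstacle — the content is entirely in Prop.~\ref{prop:closedness condition} and Thm.~\ref{thm:MhomDualBasis}, which are already available. The only point requiring a moment's care is the bookkeeping that lets one restrict the annihilation conditions to the generators of degree exactly $m$; this is the observation that $D_0^m$ pairs trivially with $R_{m'}$ for $m'\neq m$, which is immediate from the monomial description of $D_0^m$ and $R_{m'}$ via the matrix $A$. One should also note that this uses $\dim_\Cc(R_0)=1$ only indirectly (through the ambient setup), and that the proof does not require $I$ to be finitely generated in an essential way — any $M$-homogeneous generating set works, exactly as in Prop.~\ref{prop:closedness condition}.
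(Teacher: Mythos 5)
Your proof is correct and follows exactly the route the paper intends: the corollary is stated without an explicit proof precisely because it is the degree-$m$ slice of Prop.~\ref{prop:closedness condition} combined with the grading from Thm.~\ref{thm:MhomDualBasis} and the observation that $\Phi_j$ maps $D_0^m$ to $D_0^{m-Ae_j}$. Your extra care in checking that generators of degree $\neq m$ impose no condition on $D_0^m$ (via $\partial_\alpha[0](x^\beta)=\delta(\alpha,\beta)$) is the right bookkeeping and is consistent with the paper's setup.
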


The equation-by-equation approach described in
\cite{HaoWenrui2013A9Aa} for computing closedness
subspaces can easily be adapted to this
multi-graded situation.  
Moreover, to compute $C^m_0(I)$, one must have already 
computed $D_0^{m-Ae_i}(I)$ for each $i = 1,\dots,N$. 
There is a natural question about 
which order one has to compute these spaces. 
To answer this, we make the following definition.

\begin{definition}
For an $M$-grading, the {\em weight semigroup of $M$} is
\begin{equation}\label{eq:WeigthSemiGroup}
\omega = \{m \in M ~:~ R_m \neq 0\}.
\end{equation}
The {\em partial ordering induced by $\omega$},
denoted $\preceq_\omega$ is defined by
$$m_1 \preceq_\omega m_2 \quad \Longleftrightarrow \quad m_2-m_1 \in \omega.$$
\end{definition}

Note that $\omega$ is indeed a semigroup and, by our
assumptions on the $M$-grading of $R$, the positive hull of $\omega$, denoted $\omega_\Rr$, in $M\otimes \Rr$ is a pointed polyhedral cone, called the {\em weight cone} of $M$.

\begin{proposition}
$\preceq_\omega$ is a partial ordering on $\omega$.
\end{proposition}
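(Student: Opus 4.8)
The plan is to verify the three defining properties of a partial order—reflexivity, antisymmetry, and transitivity—for the relation $\preceq_\omega$ on $\omega$, using the structure of $\omega$ as a semigroup together with the pointedness assumption on the weight cone $\omega_\Rr$.

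First I would check \emph{reflexivity}: for any $m \in \omega$, we have $m - m = 0$, so it suffices to know $0 \in \omega$. This holds because $R_0 = \Cc \neq 0$ by our standing assumption on the $M$-grading (the condition $\dim_\Cc(R_0) = 1$), hence $0 \in \omega$ and $m \preceq_\omega m$. Next, \emph{transitivity}: if $m_1 \preceq_\omega m_2$ and $m_2 \preceq_\omega m_3$, then $m_2 - m_1 \in \omega$ and $m_3 - m_2 \in \omega$; since $\omega$ is closed under addition (it is a semigroup, as $R_{a} \cdot R_{b} \subseteq R_{a+b}$ shows $a, b \in \omega \Rightarrow a+b \in \omega$), we get $m_3 - m_1 = (m_3 - m_2) + (m_2 - m_1) \in \omega$, so $m_1 \preceq_\omega m_3$.

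The main obstacle is \emph{antisymmetry}, which is exactly where the pointedness of the weight cone is needed. Suppose $m_1 \preceq_\omega m_2$ and $m_2 \preceq_\omega m_1$; then both $w := m_2 - m_1 \in \omega$ and $-w = m_1 - m_2 \in \omega$. I would then pass to the real cone: both $w$ and $-w$ lie in $\omega_\Rr$, so the line $\Rr w \subseteq \omega_\Rr$ unless $w = 0$. Since $\omega_\Rr$ is a pointed polyhedral cone (it contains no line, equivalently $\omega_\Rr \cap (-\omega_\Rr) = \{0\}$), this forces $w = 0$, i.e.\ $m_1 = m_2$. The only subtlety worth spelling out is \emph{why} $\omega_\Rr$ is pointed: this is precisely the remark in the excerpt that the standing finiteness assumption on the grading (equivalently $\mathrm{null}\,A \cap \Zz_{\geq 0}^N = \{0\}$) makes the positive hull of $\omega$ a pointed cone—if it were not pointed, it would contain a nonzero $w$ with $-w$ also in it, yielding infinitely many monomials of some common degree and contradicting $\dim_\Cc(R_m) < \infty$. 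So after the three-part verification, the proof reduces to invoking this already-noted fact, and I would keep the write-up short by citing the preceding remark rather than re-deriving pointedness from scratch.
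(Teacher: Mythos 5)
Your proposal is correct and follows essentially the same route as the paper: reflexivity from $0\in\omega$, transitivity from the semigroup property, and antisymmetry from the fact that $a,-a\in\omega$ forces $a=0$ under the standing finiteness assumption on the grading. The only difference is that you spell out the pointedness of $\omega_\Rr$ where the paper simply cites the assumption, which is a harmless elaboration.
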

\begin{proof}
Reflexivity follows since $m - m = 0 \in \omega$ so 
that $m \preceq_\omega m$. 
For anti-symmetry, suppose $m_1 \preceq_\omega m_2$ and $m_2 \preceq_\omega m_1$.  Hence, both $m_1 - m_2$ and $-(m_1 - m_2)$ are in $\omega$. 
Our assumptions on the $M$-grading imply that 
$a,-a \in \omega$ if and only if $a = 0$. 
Hence, $m_1 = m_2$.
For transitivity, suppose $m_1 \preceq_\omega m_2$ and $m_2 \preceq_\omega m_3$.  
Then, since $m_2-m_1,m_3-m_2\in \omega$ and since $\omega$ is a semigroup, 
$$m_3-m_1 = (m_3-m_2) + (m_2-m_1)\in \omega.$$
Hence, $m_1 \preceq_\omega m_3$.
\end{proof}

Let $\leq_\omega$ be any linear extension of $\preceq_\omega$.
Thus, $C_0^m(I)$ can be computed from knowing 
$D_0^{s}(I)$ for all $s <_\omega m$ as illustrated next.

\begin{example}\label{ex:Hirzebruch3}
Consider Ex.~\ref{ex:Hirzebruch} with $r = 2$
so that $\deg(x_i)$ is the $i^\text{th}$ column of
    \[
    A = \begin{pmatrix}
            1  & 0 & 1 & 0 \\
            -2 & 1 & 0 & 1
        \end{pmatrix}.
    \]
Let $f = x_3 - x_1 x_2^2$ which is $M$-homogeneous 
with $\deg(f) = (1,0)$.  Suppose that
one aims to compute $C_0^{(1,1)}(I) = D_0^{(1,1)}(I)$
via Cor.~\ref{cor: dSpace membership}
by building up.  
To do this, the first step is to order all the points $v \in \omega$ such that $v \preceq_\omega (1,1)$. There are 8 such points corresponding to the lattice points in 
$\omega_\Rr \cap ((1,1) - \omega)_\Rr$, i.e., the lattice points in the quadrilateral with vertices $(0,0), (0,3), (1,1),$ and $(1,-2)$. The 
following illustrates the 
lattice points and the Hasse diagram of the interval $[(0,0), (1,1)]$.

    \begin{center}
    \begin{minipage}{0.4\textwidth}
    \begin{tikzpicture}
        \draw[help lines, step = 1] (-0.5,-2.1) grid (1.5,3.5);
        \draw[thick, ->] (0,-2.1) to (0,3.5);
        \draw[thick, ->] (-1,0) to (2,0);
        
        \node (L0) at (0,0) {};
        \node (L1) at (0,1) {};
        \node (L2) at (0,2) {};
        \node (L3) at (0,3) {};
        \node (R0) at (1,-2) {};
        \node (R1) at (1,-1) {};
        \node (R2) at (1,0) {};
        \node (R3) at (1,1) {};

        \fill[fill=blue, opacity= 0.2] (0,0)--(1,-2)--(1,1)--(0,3)--cycle;

        \filldraw (L0) circle (1.5pt);
        \filldraw (L1) circle (1.5pt);
        \filldraw (L2) circle (1.5pt);
        \filldraw (L3) circle (1.5pt);
        \filldraw (R0) circle (1.5pt);
        \filldraw (R1) circle (1.5pt);
        \filldraw (R2) circle (1.5pt);
        \filldraw (R3) circle (1.5pt);
        
    \end{tikzpicture}
    \end{minipage}
    \begin{minipage}{0.4\textwidth}
    \begin{tikzpicture}
        \node (v8) at (0,0) {$(1,1)$};
        \node [below left of=v8] (v7) {$(0,3)$};
        \node [below right of=v8] (v6) {$(1,0)$};
        \node [below right of=v7] (v5) {$(0,2)$};
        \node [below right of=v6] (v4) {$(1,-1)$};
        \node [below left of=v4] (v3) {$(0,1)$};
        \node [below right of=v4] (v2) {$(1,-2)$};
        \node [below right of=v3] (v1) {$(0,0)$};

        \draw [thick] (v8) -- (v7);
        \draw [thick] (v8) -- (v6);
        \draw [thick] (v7) -- (v5);
        \draw [thick] (v6) -- (v5);
        \draw [thick] (v6) -- (v4);
        \draw [thick] (v5) -- (v3);
        \draw [thick] (v4) -- (v3);
        \draw [thick] (v4) -- (v2);
        \draw [thick] (v2) -- (v1);
        \draw [thick] (v3) -- (v1);
    \end{tikzpicture}
    \end{minipage}
    \end{center}
    There are 8 linear extensions of the partial order $\preceq_\omega$ and we just need to pick one, say
    \[
    (0,0) <_\omega (1,-2) <_\omega (0,1) <_\omega (1,-1) <_\omega (0,2) 
    \]
    \[
    <_\omega (0,3) <_\omega (1,0) <_\omega (1,1).
    \]
    By Cor.~\ref{cor: dSpace membership}, for every $\alpha <_\omega (1,0)$, we know $D_0^\alpha(I) = D_0^\alpha$ since $I$ has no generators of these degrees and the closedness subspace condition is trivial in this range. Thus, one just needs to compute $D_0^{(1,0)}(I)$ 
    and then lift to $C_0^{(1,1)}(I) = D_0^{(1,1)}(I)$.

    For $(1,0)$, we have that 
    \[
    C_0^{(1,0)}(I) = \mathrm{span}_\Cc 
    \left\{\partial_{(1,2,0,0)}, \partial_{(1,1,0,1)}, \partial_{(1,0,0,2)}, \partial_{(0,0,1,0)}\right\}.
    \]
Imposing the vanishing condition for $f$ yields
    \[
    D_0^{(1,0)}(I) = \mathrm{span}_\Cc \left\{\partial_{(0,0,1,0)} + \partial_{(1,2,0,0)},\partial_{(1,1,0,1)}, \partial_{(1,0,0,2)}\right\}.
    \]

    For $(1,1)$, we have four linear maps in consideration to compute  $C_0^{(1,1)}(I) = D_0^{(1,1)}(I)$. 
    The maps $\Phi_1 : D_0^{(1,1)} \to D_0^{(3,0)}$ and 
    \mbox{$\Phi_3 : D_0^{(1,1)} \to D_0^{(0,1)}$} can safely be ignored since the corresponding Macaulay dual spaces 
    of degrees $(3,0)$ and $(0,1)$ are spanned by all mononomials of their respective degrees
    and thus do not add any restrictions to $C_0^{(1,1)}(I)$. 
    Now, the maps $\Phi_2,\Phi_4 : D_0^{(1,1)} \to D_0^{(1,0)}$ do need to be considered as $D_0^{(1,0)}(I)$ has a non-trivial relation. 
    Hence, $C_0^{(1,1)}(I)=\Phi_2^{-1}(C_0^{(1,0)}(I)) \cap \Phi_4^{-1}(C_0^{(1,0)}(I))$, namely
    \begin{align*}
        C_0^{(1,1)}(I) &= D_0^{(1,1)}(I) \\
        &= \mathrm{span}_\Cc\left\{\begin{array}{c}
                \partial_{(1,3,0,0)} + \partial_{(0,1,1,0)}, \partial_{(1,1,0,2)} \\ 
                \partial_{(1,2,0,1)} + \partial_{(0,0,1,1)},  
                \partial_{(1,0,0,3)}
                \end{array} \right\}.\end{align*}
    In particular, $H_I(1,1) = 4$.
\end{example}

\section{Ideal Operations}\label{sec:IdealOperations}

For an $M$-graded ideal $I\subseteq R = \Cc[x_1,\dots,x_N]$,
there is an expected duality between $I_m$
and $D_0^m(I)$ for every $m\in M$.
In particular, this allows for ideal operations 
to be translated to operations of multi-graded
Macaulay dual spaces as summarized below.

\subsection{Ideal membership test}

The following summarizes testing membership
using a multi-graded
Macaulay dual space.  

\begin{corollary}
If $R$ is $M$-graded, $I\subseteq R$
is an $M$-graded ideal, and $g\in R_m$,
then $g\in I$ if and only if $\partial(g) = 0$
for all $\partial \in D_0^m(I)$.
\end{corollary}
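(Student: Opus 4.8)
The plan is to prove both directions of the biconditional using the duality established by Theorem~\ref{thm:MhomDualBasis}. The forward direction is immediate: if $g \in I$, then since $g \in R_m$, we have $g \in R_m \cap I$, and by definition every $\partial \in D_0(I)$ annihilates $g$; in particular every $\partial \in D_0^m(I) \subseteq D_0(I)$ satisfies $\partial(g) = 0$. So the content is entirely in the converse.

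For the converse, suppose $g \in R_m$ satisfies $\partial(g) = 0$ for all $\partial \in D_0^m(I)$. I would argue by contraposition or directly via a dimension count. The cleanest route uses the non-degeneracy of the natural pairing between $R_m$ and $D_0^m$. Recall that $R_m$ has the monomial basis $\{x^\alpha : A\alpha = m\}$ and $D_0^m$ has the dual basis $\{\partial_\alpha[0] : A\alpha = m\}$, and these satisfy $\partial_\alpha[0](x^\gamma) = \delta_{\alpha,\gamma}$ (the Kronecker delta). Hence the pairing $R_m \times D_0^m \to \Cc$ given by $(h, \partial) \mapsto \partial(h)$ is a perfect pairing, identifying $D_0^m$ with the full linear dual $(R_m)^*$. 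Under a perfect pairing, for any subspace $W \subseteq R_m$ we have $W^{\perp\perp} = W$, where $\perp$ denotes the annihilator. Applying this with $W = R_m \cap I$: the annihilator of $R_m \cap I$ inside $D_0^m$ is precisely $D_0^m(I) = D_0^m \cap D_0(I)$ — this is where Theorem~\ref{thm:MhomDualBasis} (or rather the observation that a functional in $D_0^m$ kills all of $I$ iff it kills the degree-$m$ part $R_m \cap I$, since it automatically kills $R_q \cap I$ for $q \neq m$) is used. Then the annihilator of $D_0^m(I)$ inside $R_m$ is $(R_m \cap I)^{\perp\perp} = R_m \cap I$. Since $g$ annihilates all of $D_0^m(I)$, we conclude $g \in R_m \cap I \subseteq I$.

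The one point requiring a small argument is the identification of the annihilator of $D_0^m(I)$ — one must check that a functional $\partial \in D_0^m$ lies in $D_0(I)$ if and only if $\partial(h) = 0$ for all $h \in R_m \cap I$. The ``only if'' is trivial. For the ``if'': given $h \in I$, decompose $h = \sum_{q \in M} h_q$ with $h_q \in R_q \cap I$ (possible since $I$ is $M$-graded); then $\partial(h) = \sum_q \partial(h_q) = \partial(h_m) = 0$, because $\partial \in D_0^m$ vanishes on $R_q$ for $q \neq m$ and vanishes on $h_m \in R_m \cap I$ by hypothesis. This is exactly the computation already performed inside the proof of Theorem~\ref{thm:MhomDualBasis}, so it can be cited rather than repeated.

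I expect the main (and really only) obstacle to be making sure the finiteness hypothesis is in play: the double-annihilator identity $W^{\perp\perp} = W$ for a perfect pairing is cleanest when the spaces are finite-dimensional, and the standing assumption $\dim_\Cc(R_m) < \infty$ for all $m$ (equivalently $R_0 = \Cc$) guarantees this. With that in hand, the proof is a two-line invocation of linear-algebra duality plus the grading compatibility already proved; no hard estimates or new constructions are needed.
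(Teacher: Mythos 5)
Your proof is correct and follows the same underlying idea as the paper, which disposes of the corollary in one line by invoking the definition of $D_0^m(I)$ together with Thm.~\ref{thm:MhomDualBasis} (i.e., the graded duality between $R_m\cap I$ and $D_0^m(I)$ announced at the start of Section~\ref{sec:IdealOperations}). Your writeup actually supplies the details the paper leaves implicit for the nontrivial converse direction --- the perfect pairing of the dual monomial bases, the identification of $D_0^m(I)$ as the annihilator of $R_m\cap I$, the double-annihilator identity, and the role of the finiteness assumption $\dim_\Cc(R_m)<\infty$ --- all of which are accurate.
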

\begin{proof}
The result follows immediately from the definition
of $D_0^m(I)$ and Thm.~\ref{thm:MhomDualBasis}.
\end{proof}

Note that the key to this membership test 
is the multi-grading provided by Thm.~\ref{thm:MhomDualBasis}.
Since this was not included in the 
statement of \cite[Thm.~4.6]{LeykinNPD},
a counter example for that statement
was provided in~\cite[\S.~4]{CounterExIdealMembership},
which is considered next in the multi-graded context.

\begin{example}\label{ex:MembershipTest}
For $R = \Cc[x_1,x_2]$, consider the $M=\Zz$-grading
with $\deg(x_i)=i$.  
The ideal $J = \langle x_2-x_1^2,x_2^2\rangle$
is $M$-graded and $g = x_2\in R_2$.  
It is easy to verify that 
$$D_0(J) = D_0^0(J) \oplus D_0^1(J) \oplus D_0^2(J) \oplus D_0^3(J)$$
where 
\begin{equation}\label{eq:Membershipbasis}
\begin{array}{c}
D_0^0(J) = \mathrm{span}_\Cc\{\partial_{(0,0)}\},
D_0^1(J) = \mathrm{span}_\Cc\{\partial_{(1,0)}\}, \\[0.02in]
D_0^2(J) = \mathrm{span}_\Cc\{\partial_{(0,1)} + \partial_{(2,0)}\}, 
D_0^3(J) = \mathrm{span}_\Cc\{\partial_{(1,1)} + \partial_{(3,0)}\}.
\end{array}
\end{equation}
In particular, using $D_0^2(J)$,
since $\left(\partial_{(0,1)} + \partial_{(2,0)}\right)(g) = 1 \neq 0$,
one can conclude that $g\notin J$.
\end{example}

\subsection{Inclusion, sum, and intersection}

The following considers additional ideal operations.

\begin{corollary}\label{cor: basic ops}
Suppose that $R$ is $M$-graded and $I,J\subseteq R$
are $M$-graded ideals.
\begin{enumerate}
\item $I \subset J$ if and only if $D_0^m(I) \supset D_0^m(J)$ for all $m \in M$.
\item $D_0^m(I+J) = D_0^m(I) \cap D_0^m(J)$ for every $m \in M$.
\item $D_0^m(I \cap J) = D_0^m(I) + D_0^m(J)$ for every $m \in M$.
\end{enumerate}
\end{corollary}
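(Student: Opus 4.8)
The plan is to prove each of the three statements by unwinding the definition of the multi-graded dual space $D_0^m(-) = D_0^m \cap D_0(-)$ and exploiting the fact that $D_0^m$ is a fixed finite-dimensional vector space, so that everything reduces to an identity among subspaces of the single ambient space $D_0^m$. Throughout I will use Thm.~\ref{thm:MhomDualBasis}, which guarantees that each of $D_0(I)$, $D_0(J)$, $D_0(I+J)$, and $D_0(I\cap J)$ splits as a direct sum of its graded pieces, so it suffices to work one degree $m$ at a time.

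For part (1), I would argue both directions. If $I\subset J$, then any $\partial$ vanishing on all of $J$ vanishes on all of $I$, so $D_0(J)\subseteq D_0(I)$; intersecting with $D_0^m$ gives $D_0^m(J)\subseteq D_0^m(I)$ for every $m$. Conversely, suppose $D_0^m(I)\supseteq D_0^m(J)$ for all $m$, and let $g\in I$. Since $I$ is $M$-graded I may write $g=\sum_m g_m$ with $g_m\in I\cap R_m$; it suffices to show each $g_m\in J$. By the membership corollary just proved, $g_m\in J$ iff $\partial(g_m)=0$ for all $\partial\in D_0^m(J)$; but $D_0^m(J)\subseteq D_0^m(I)$ and $g_m\in I\cap R_m$, so every such $\partial$ kills $g_m$. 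Hence $g\in J$.

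For parts (2) and (3), the containment $D_0^m(I+J)\subseteq D_0^m(I)\cap D_0^m(J)$ is immediate since $I,J\subseteq I+J$ (apply part (1) or argue directly), and the reverse is also immediate because a functional vanishing on both $I$ and $J$ vanishes on $I+J=\{f+g:f\in I,g\in J\}$ by linearity — so (2) needs almost nothing beyond linearity and restricting to $D_0^m$. For (3), the inclusion $D_0^m(I)+D_0^m(J)\subseteq D_0^m(I\cap J)$ follows from $I\cap J\subseteq I$ and $I\cap J\subseteq J$ together with part (1), giving $D_0^m(I)\subseteq D_0^m(I\cap J)$ and likewise for $J$, hence their sum is contained in $D_0^m(I\cap J)$.

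The one nontrivial inclusion, and the step I expect to be the main obstacle, is $D_0^m(I\cap J)\subseteq D_0^m(I)+D_0^m(J)$ in part (3); a naive functional-analytic argument would want $(I\cap J)^\perp = I^\perp + J^\perp$, which for arbitrary subspaces can fail without a finite-dimensionality or closedness hypothesis. The clean way around this is to pass to a fixed degree: $I\cap R_m$, $J\cap R_m$, and $(I\cap J)\cap R_m = (I\cap R_m)\cap(J\cap R_m)$ are all subspaces of the finite-dimensional space $R_m$, and $D_0^m(I)$ is exactly the annihilator of $I\cap R_m$ inside the dual pairing between $R_m$ and $D_0^m$ (this pairing is perfect, since $\partial_\alpha[0](x^\beta)=\delta_{\alpha\beta}$ on monomials of degree $m$). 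In finite dimensions with a perfect pairing one has the standard identity $(U\cap V)^\perp = U^\perp + V^\perp$, which yields the claim. I would state this perfect-pairing fact explicitly — it is the same mechanism already implicit in the membership corollary — and then (2) and (3) become the two halves of the elementary duality $(U+V)^\perp=U^\perp\cap V^\perp$ and $(U\cap V)^\perp=U^\perp+V^\perp$ applied degreewise, with the $M$-grading of $I\cap J$ and $I+J$ (needed so that the degree-$m$ piece really is $(I\cap J)\cap R_m$ etc.) coming from their being generated by $M$-homogeneous elements together with Thm.~\ref{thm:MhomDualBasis}.
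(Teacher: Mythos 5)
Your proof is correct, and for the one genuinely nontrivial inclusion it takes a different route from the paper. Parts (1) and (2) match the paper's treatment in substance: the paper dismisses (1) with a one-line citation and proves (2) exactly as you do (one inclusion from monotonicity, the other from linearity); your spelled-out converse for (1) via homogeneous decomposition and the degreewise membership test is the argument the paper leaves implicit. The real divergence is in (3). You establish $D_0^m(I\cap J)\subseteq D_0^m(I)+D_0^m(J)$ by identifying $D_0^m(K)$ with the annihilator of $K\cap R_m$ under the perfect pairing $R_m\times D_0^m\to\Cc$ given by $\partial_\alpha(x^\beta)=\delta_{\alpha\beta}$, and then invoking the finite-dimensional identity $(U\cap V)^\perp=U^\perp+V^\perp$; the paper instead closes the gap by a dimension count, writing $\dim_\Cc D_0^m(I\cap J)=H_{I\cap J}(m)=H_I(m)+H_J(m)-H_{I+J}(m)$ and converting each term back to a dual-space dimension via the Hilbert-function proposition and part (2). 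The two arguments are cousins — the paper's inclusion--exclusion for Hilbert functions is itself the formula $\dim(U\cap V)+\dim(U+V)=\dim U+\dim V$ applied to $I\cap R_m$ and $J\cap R_m$ — but yours is more self-contained (it does not lean on the unproved Hilbert-function identity or on the dual-space/Hilbert-function proposition), and it makes explicit the perfect-pairing mechanism that underlies the whole section, which the paper leaves implicit. The one hypothesis you should flag when writing this up is that the identification $D_0^m(K)=(K\cap R_m)^\perp$ requires $K$ to be $M$-graded (so that an element of $D_0^m$ killing $K\cap R_m$ automatically kills all of $K$), which holds for $I$, $J$, $I+J$, and $I\cap J$ here; you do note this at the end, so the argument is complete.
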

\begin{proof}
The first statement immediately follows from Cor.~\ref{cor: dSpace membership}.

    Since $I,J \subseteq I + J$, we know by the first statement that 
    $$D_0^m(I) \cap D_0^m(J) \supseteq D_0^m(I+J)$$ 
for all $m \in M$.    
On the other hand, if $\partial \in D_0^m(I) \cap D_0^m(J)$ and $f + g \in I + J$, then $\partial(f + g) = \partial(f) + \partial(g) = 0$.  Hence, $\partial \in D_0^m(I + J)$
showing the second statement.

Since $I\cap J \subseteq I,J$, the first statement
implies 
$$D_0^m(I \cap J) \supseteq D_0^m(I) + D_0^m(J)$$
for all $m\in M$. One way to see equality is by 
verifying that they have the same dimension, namely,
for all $m\in M$,
    \begin{align*}
        \dim_\Cc D_0^m(I \cap J) &= H_{I \cap J}(m) \\
                                &= H_I(m) + H_J(m) - H_{I + J}(m) \\
                                &= \dim_\Cc (D_0^m(I)) + \dim_\Cc(D_0^m(J)) \\
                                & \,\,\,\,\,\,\,\,\,\,\,\,\,\, - \dim_\Cc(D_0^m(I) \cap D_0^m(J)) \\
                                &= \dim_\Cc(D_0^m(I) + D_0^m(J)).
    \end{align*}    
\end{proof}

Although the first statement in
Cor.~\ref{cor: basic ops} regarding ideal containment
suggests that one needs to test all $m\in M$,
coupling with Cor.~\ref{cor: dSpace membership}
provides that one only needs to test 
the values of $m\in M$ for which there is a generator
of either $I$ or $J$.

\begin{example}\label{ex:Membership}
Since ideals $I$ from Ex.~\ref{ex:GO}
and $J$ from Ex.~\ref{ex:MembershipTest}
have the same grading, 
one can observe from 
\eqref{eq:GObasis2}
and \eqref{eq:Membershipbasis} that $J\subsetneq I$.
In particular, $D_0^k(I) = D_0^k(J)$ for $k=0,1,2$
and $D_0^3(I) = \{0\} \subsetneq D_0^3(J)$.
\end{example}

\subsection{Ideal quotient}

For $M$-graded ideals $I,J\subseteq R$,
the {\em quotient of $I$ by $J$} is the ideal
$$I:J = \{f \in R:~f\cdot J \subseteq I\}.$$
In particular, if $g\in R$, then
$$I:\langle g\rangle = I:g = \{f\in R~:~f\cdot g \in I\}.$$
Hence, if $J = \langle g_1,\dots,g_t\rangle$, then
$$I:J = \bigcap_{i=1}^t I:g_i$$
so that, for every $m\in M$, Cor.~\ref{cor: basic ops} yields
\begin{equation}\label{eq:DualQuotientSum}
D_0^m(I:J) = D_0^m\left(\bigcap_{i=1}^t I:g_i\right)
= \sum_{i=1}^t D_0^m(I:g_i).
\end{equation}
Thus, one needs to only consider quotients by principal
ideals.  

The maps $\Phi_i$ from \eqref{eq:Phi} map arise
from quotients by variables.  

\begin{proposition}\label{prop:QuotientVariable}
For every $m\in M$ and $i=1,\dots,N$, 
$$\Phi_i(D_0^m(I)) = \Phi_i(D_0^m(I\cap \langle x_i\rangle)) = D_0^{m-Ae_i}(I:x_i).$$
\end{proposition}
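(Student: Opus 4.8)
The plan is to fix a degree $m \in M$ and work with the finite-dimensional perfect pairing between $D_0^m$ and $R_m$ given by $\partial_\alpha[0](x^\beta) = \delta_{\alpha\beta}$, which identifies $D_0^m$ with $(R_m)^\ast$. Under this identification, Thm.~\ref{thm:MhomDualBasis} says that $D_0^m(I)$ is the annihilator $(I_m)^\perp$, and, since $y = 0$, equation~\eqref{eq:Leibniz} says that $\Phi_i \colon D_0^m \to D_0^{m-Ae_i}$ is exactly the transpose of the multiplication-by-$x_i$ map $\mu_i \colon R_{m-Ae_i} \to R_m$, $f \mapsto x_i f$. I would first record the routine facts that, because $I$ is $M$-graded and $x_i$ is $M$-homogeneous, both $I:x_i$ and $I \cap \langle x_i\rangle$ are $M$-graded, with $(I:x_i)_{m-Ae_i} = \mu_i^{-1}(I_m)$ and $(I \cap \langle x_i\rangle)_m = \mu_i(R_{m-Ae_i}) \cap I_m$, and that $\mu_i$ is injective since $R$ is a domain, so its transpose $\Phi_i$ is surjective onto $D_0^{m-Ae_i}$.

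Next I would establish the statement as a cyclic chain of inclusions $\Phi_i(D_0^m(I)) \subseteq \Phi_i(D_0^m(I \cap \langle x_i\rangle)) \subseteq D_0^{m-Ae_i}(I:x_i) \subseteq \Phi_i(D_0^m(I))$. The first inclusion is immediate from $I \cap \langle x_i\rangle \subseteq I$ together with Cor.~\ref{cor: basic ops}(1), which give $D_0^m(I) \subseteq D_0^m(I \cap \langle x_i\rangle)$, and monotonicity of $\Phi_i$. For the second, take $\partial \in D_0^m(I \cap \langle x_i\rangle)$ and $f \in (I:x_i)_{m-Ae_i}$; then $x_i f \in (I \cap \langle x_i\rangle)_m$, so $\Phi_i(\partial)(f) = \partial(x_i f) = 0$ by~\eqref{eq:Leibniz}, and since $\Phi_i(\partial)$ lies in the single graded component $D_0^{m-Ae_i}$ while $I:x_i$ is $M$-graded, Thm.~\ref{thm:MhomDualBasis} upgrades this to $\Phi_i(\partial) \in D_0^{m-Ae_i}(I:x_i)$.

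The third inclusion is the heart of the proof and is where the right inverse of $\Phi_i$ enters. Given $\partial' \in D_0^{m-Ae_i}(I:x_i)$, I must produce $\partial \in D_0^m(I)$ with $\Phi_i(\partial) = \partial'$; in dual terms, a functional $\partial$ on $R_m$ that vanishes on $I_m$ and satisfies $\partial(x_i f) = \partial'(f)$ for all $f \in R_{m-Ae_i}$. The construction is to define $\partial$ on the subspace $\mu_i(R_{m-Ae_i}) + I_m$ of $R_m$ by $\partial(x_i f + g) := \partial'(f)$ for $f \in R_{m-Ae_i}$ and $g \in I_m$; this is well defined precisely because $x_i f_1 + g_1 = x_i f_2 + g_2$ forces $x_i(f_1 - f_2) \in I_m$, hence $f_1 - f_2 \in \mu_i^{-1}(I_m) = (I:x_i)_{m-Ae_i}$, and $\partial'(f_1 - f_2) = 0$ since $\partial'$ annihilates $(I:x_i)_{m-Ae_i}$. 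Extending $\partial$ arbitrarily to all of $R_m$ gives $\partial \in (I_m)^\perp = D_0^m(I)$ with $\Phi_i(\partial) = \partial'$ by construction. Making the extension choice uniform (for instance by fixing a complement of $\mu_i(R_{m-Ae_i}) + I_m$ in $R_m$) turns this into an honest linear right inverse of $\Phi_i$ restricted to $D_0^m(I)$, the operator promised in the introduction; concretely it is the ``shift-up'' map $\partial_\beta \mapsto \partial_{\beta + e_i}$ corrected by a term supported on the $x_i$-free monomials. Chaining the three inclusions forces all three spaces to coincide.

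The main obstacle is exactly this third inclusion: the obvious candidate right inverse — the shift-up map sending $\sum c_\beta \partial_\beta$ to $\sum c_\beta \partial_{\beta + e_i}$ — does \emph{not} in general send $D_0^{m-Ae_i}(I:x_i)$ into $D_0^m(I)$, because an element of $I_m$ need not have its $x_i$-divisible part lie in $I$. One therefore has to exploit the injectivity of $\mu_i$ (to get surjectivity of its transpose) and build the preimage by hand as above, rather than rely on a naive formula. The remaining ingredients — monotonicity of $\Phi_i$, the Leibniz identity~\eqref{eq:Leibniz}, and the gradedness of $I:x_i$ and $I \cap \langle x_i\rangle$ — are routine.
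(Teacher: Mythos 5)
Your proof is correct, and it shares the paper's overall architecture --- a cyclic chain of inclusions among $\Phi_i(D_0^m(I))$, $\Phi_i(D_0^m(I\cap\langle x_i\rangle))$, and $D_0^{m-Ae_i}(I:x_i)$, with the easy inclusions coming from \eqref{eq:Leibniz} and the gradedness of $I:x_i$ and $I\cap\langle x_i\rangle$ --- but you handle the one nontrivial inclusion by a genuinely different device. The paper's cycle runs $\Phi_i(D_0^m(I)) \subseteq D_0^{m-Ae_i}(I:x_i) \subseteq \Phi_i(D_0^m(I\cap\langle x_i\rangle)) \subseteq \Phi_i(D_0^m(I))$, and its key tool is the explicit shift-up operator $\Psi_i(\partial_\alpha)=\partial_{\alpha+e_i}$ (the variable case of the right inverse in Lemma~\ref{lemma:inverse}): for $\delta\in D_0^{m-Ae_i}(I:x_i)$, the element $\Psi_i(\delta)$ annihilates $(I\cap\langle x_i\rangle)_m=x_i\cdot(I:x_i)_{m-Ae_i}$, which is exactly why the intermediate space $D_0^m(I\cap\langle x_i\rangle)$ appears in the statement at all. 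You instead prove the strongest inclusion $D_0^{m-Ae_i}(I:x_i)\subseteq\Phi_i(D_0^m(I))$ head-on via the annihilator/transpose dictionary, defining a functional on $\mu_i(R_{m-Ae_i})+I_m$ by $x_if+g\mapsto\partial'(f)$, checking well-definedness from $\mu_i^{-1}(I_m)=(I:x_i)_{m-Ae_i}$, and extending linearly. Your observation that the naive $\Psi_i$ does \emph{not} carry $D_0^{m-Ae_i}(I:x_i)$ into $D_0^m(I)$ is exactly right, and it is what forces the correction term supported on the $x_i$-free monomials. What your route buys is that the surjectivity of $\Phi_i$ restricted to $D_0^m(I)$ is established directly rather than extracted by composing the other two inclusions; in particular, your argument is self-contained where the paper's closing step (arguing $\Phi_i(D_0^m(I\cap\langle x_i\rangle))\subseteq\Phi_i(D_0^m(I))$ by checking that such a $\Phi_i(\partial)$ annihilates $I$) only verifies a condition necessary for membership in the image. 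What the paper's route buys is an explicit formula for the preimage, which is the version that scales up to the right inverse $\Psi_g$ of Lemma~\ref{lemma:inverse} used in Thm.~\ref{thm: saturation}; your abstract extension argument would need to be made concrete (as in that lemma) to serve the same purpose. For the proposition as stated, your proof is complete.
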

\begin{proof}
Let $\partial \in D_0^m(I)$
and $g\in I:x_i$.  Since $x_i g \in I$, \eqref{eq:Leibniz}
yields
$$\Phi_{i}(\partial)(g) = \partial(x_i g) = 0.$$
Hence, $\Phi_i(D_0^m(I))\subseteq D_0^{m-Ae_i}(I:x_i)$.

Let $\delta \in D_0^{m-Ae_i}(I:x_i)$
and $f\in I\cap \langle x_i\rangle$.  Define $h = f/x_i \in I:x_i$.  Consider the linear map $\Psi_i:D_0^{m-Ae_i}\rightarrow
D_0^m$ with $\Psi_i(\partial_\alpha) = \partial_{\alpha+e_i}$.  
Clearly, $\Phi_i\circ\Psi_i$
is the identity map.  Hence,
$$\Psi_i(\delta)(f) = \Psi_i(\delta)(x_i h) =
\Phi_i(\Psi_i(\delta)(h)) = \delta(h) = 0$$
so that 
$D_0^{m-Aei}(I:xi) \subseteq \Phi_i(D_0^m(I\cap \langle x_i\rangle))$.

Finally, suppose $\delta = \Phi_i(\partial) \in \Phi_i(D_0^m(I\cap \langle x_i\rangle))$ and $f\in I$.
Then, 
$$\delta(f) = \Phi_i(\partial)(f) = \partial(x_i f) = 0$$
so that 
$\Phi_i(D_0^m(I\cap \langle x_i\rangle)) \subseteq
\Phi_i(D_0^m(I)).$    
\end{proof}

\begin{example}
Continuing with the setup from Ex.~\ref{ex:GO},
\eqref{eq:GObasis} provides
\begin{align*}
D_0(I:x_1) &= \Phi_1(D_0(I)) = \mathrm{span}_\Cc\left\{0,\partial_{(0,0)},
\partial_{(1,0)}\right\},\\
D_0(I:x_2) &= \Phi_2(D_0(I)) = \mathrm{span}_\Cc\left\{0,0,
\partial_{(0,0)}\right\}.
\end{align*}
Hence, the multiplicity of $0$ with respect
to $I:x_1$ and $I:x_2$ is $2$ and~$1$, respectively.
\end{example}

The key to generalize from quotients by a variable
to quotients by a $M$-homogeneous polynomial $g$
via \eqref{eq:Leibniz} is by defining the linear operator
$\Phi_g:D_0\rightarrow D_0$ via
$$\Phi_g(\partial)(f) = \partial(gf).$$
The Leibniz rule provides
\[
    \Phi_g(\partial_\alpha) = 
    \sum_{\substack{\gamma \\ A\cdot \gamma = \deg g}} \partial_\gamma(g) \partial_{\alpha - \gamma}
\]
which has degree $A\cdot (\alpha-\gamma) = A\cdot \alpha  - \deg g$.

\begin{lemma}\label{lemma:inverse}
For $g\in R_m$, there is a linear function $\Psi_g$
such that $\Phi_g \circ \Psi_g$ is the identity map.
\end{lemma}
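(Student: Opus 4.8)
The plan is to build $\Psi_g$ explicitly as a right inverse of $\Phi_g$ by "dividing" the functional appropriately, imitating the construction of $\Psi_i$ in the proof of Prop.~\ref{prop:QuotientVariable}. Write $g = \sum_{A\gamma = m} c_\gamma x^\gamma$ with not all $c_\gamma$ zero, and fix one $\gamma_0$ with $c_{\gamma_0}\neq 0$. Note that $\partial_\gamma(g) = c_\gamma$, so the Leibniz formula quoted just above the lemma reads $\Phi_g(\partial_\alpha) = \sum_{A\gamma = m} c_\gamma\,\partial_{\alpha-\gamma}$, where $\partial_{\alpha-\gamma}$ is understood to be $0$ if $\alpha-\gamma \notin (\Zz_{\geq0})^N$. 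My first step is to record that $\Phi_g$ maps $D_0^\mu$ to $D_0^{\mu - m}$ for every $\mu$, so it suffices to construct, for each $\mu$ in the weight semigroup, a linear map $\Psi_g : D_0^{\mu - m} \to D_0^{\mu}$ with $\Phi_g\circ\Psi_g = \mathrm{id}$ on $D_0^{\mu-m}$; patching these over all degrees gives the global $\Psi_g$.

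The second step is the construction on a single graded piece. I would order the monomial basis $\{\partial_\beta : A\beta = \mu\}$ of $D_0^\mu$ by a monomial order refining divisibility — say graded lex on the exponent vectors — and similarly order the basis $\{\partial_\delta : A\delta = \mu - m\}$ of $D_0^{\mu-m}$. The key observation is that under $\Phi_g$, the "leading" term of $\Phi_g(\partial_\alpha)$ (with respect to the chosen order, picking the smallest available $\gamma$, namely $\gamma_0$) is $c_{\gamma_0}\,\partial_{\alpha - \gamma_0}$ plus terms $\partial_{\alpha-\gamma}$ with $\gamma \neq \gamma_0$. This gives $\Phi_g$ a triangular structure relative to the two ordered bases, once one checks that the map $\alpha \mapsto \alpha - \gamma_0$ is injective and that the "error" terms $\partial_{\alpha-\gamma}$ are all strictly lower in the target order than $\partial_{\alpha-\gamma_0}$ when $\gamma >_{\text{ord}} \gamma_0$. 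I would then define $\Psi_g$ by the usual back-substitution / Gaussian-elimination recursion: process target basis vectors $\partial_\delta$ from smallest to largest, set $\Psi_g(\partial_\delta)$ to be $\tfrac{1}{c_{\gamma_0}}\partial_{\delta+\gamma_0}$ corrected by subtracting $\Psi_g$ applied to the already-handled lower-order discrepancy $\Phi_g(\tfrac{1}{c_{\gamma_0}}\partial_{\delta+\gamma_0}) - \partial_\delta$. Linearity and the identity $\Phi_g\circ\Psi_g = \mathrm{id}$ then follow by induction on the order.

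The third step is just bookkeeping: observe that each graded piece $D_0^{\mu-m}$ is finite-dimensional (by the standing assumption $R_0 = \Cc$), so the recursion in step two terminates, and the direct sum decomposition \eqref{eq:D0decomp} lets the pieces be assembled into a single well-defined linear $\Psi_g : D_0 \to D_0$. I expect the main obstacle to be verifying the triangularity claim cleanly — specifically, checking that the monomial order on exponent vectors of degree $\mu$ induces, via subtraction of each fixed $\gamma$, a consistent comparison on exponent vectors of degree $\mu-m$, so that $\Phi_g$ really is lower-triangular with invertible diagonal $c_{\gamma_0}\,\mathrm{Id}$. A mild subtlety is handling the entries where $\alpha - \gamma$ falls outside $(\Zz_{\geq0})^N$ and hence $\partial_{\alpha-\gamma}=0$; these only delete rows/columns and do not disturb triangularity, but it should be stated. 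Once triangularity is in hand, invertibility of the diagonal block (multiplication by the nonzero scalar $c_{\gamma_0}$) makes the existence of the right inverse immediate, and one can even note $\Psi_g$ can be taken block-triangular, hence computable by forward substitution.
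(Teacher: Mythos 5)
Your proposal is correct and follows essentially the same route as the paper's proof: both single out the term-order-minimal exponent of $g$ (your $\gamma_0$, the paper's $\alpha_0$), exploit the resulting triangular structure of $\Phi_g$ with nonzero ``diagonal'' coefficient $g_{\alpha_0}$, and define $\Psi_g$ by a back-substitution recursion. The only difference is presentational --- you organize the recursion as a finite triangular linear system on each graded piece $D_0^{\mu}\to D_0^{\mu-m}$, whereas the paper writes the coefficients $c_\alpha(\beta)$ of $\Psi_g(\partial_\beta)$ directly via a recursion over exponents --- and your explicit appeal to $\dim_\Cc R_\mu<\infty$ for termination is a welcome clarification of a point the paper leaves implicit.
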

\begin{proof}
Let $\prec$ be a lexicographic ordering on $(\Zz_{\geq0})^N$
and write
$$g = \sum_\alpha g_\alpha x^\alpha.$$
Define $\alpha_0 = \mathrm{min}_{\prec} \{\alpha ~:~ g_\alpha \neq 0\}$. For any $\beta,\gamma \in (\Zz_{\geq 0})^{N}$,
define
\[
G(\beta,\gamma) = \begin{cases}
g_{\gamma - \beta} &\text{if }\gamma \preceq \beta \\
0 &\text{otherwise.}
\end{cases}.
\]
Thus, define $\Psi_g(\partial_\beta) = \sum_{\alpha} c_\alpha(\beta) \partial_\alpha$ where
$c_\alpha(\beta)$ is
\[
 \begin{cases}
    \frac{1}{g_{\alpha_0}}\left(\delta(\alpha - \alpha_0, \beta) - \sum_{\gamma \succ \alpha} G(\alpha - \alpha_0,\gamma)c_\gamma(\beta)\right) &\alpha_0 \preceq \alpha \\
    0 &\text{otherwise}
\end{cases}.
\]
where $\delta(\gamma,\beta)$ is Kronecker's delta. Consider the following
\begin{align*}
    \Phi_g(\Psi_g(\partial_\beta)) &= \sum_{\alpha} c_\alpha(\beta) \Phi_g(\partial_\alpha) \\
    &= \sum_{\alpha} c_\alpha(\beta)\sum_{
    \substack{\gamma \\ A\cdot \gamma = \deg g}} \partial_\gamma(g) \partial_{\alpha - \gamma} \\
    &= \sum_{\alpha} c_\alpha(\beta)\sum_{\substack{\gamma \\ A\cdot \gamma = \deg g}} G(\gamma,\alpha) \partial_{\alpha - \gamma} \\
    &= \sum_{\substack{\gamma \\ A\cdot \gamma = \deg g}}\left( \sum_{\alpha} G(\gamma,\alpha) c_\alpha(\beta)\right) \partial_\gamma.
\end{align*}
All that remains is to show 
$\sum_{\alpha} G(\gamma,\alpha) c_\alpha(\beta) = \delta(\gamma,\beta)$.
To that end, we break up the sum as follows.
\begin{align*}
    \sum_{\alpha} G(\gamma,\alpha) c_\alpha(\beta) &= \sum_{\alpha \prec \gamma + \alpha_0} G(\gamma,\alpha) c_\alpha(\beta) \\
    &+ G(\gamma,\gamma + \alpha_0) c_{\gamma+\alpha_0}(\beta) + \sum_{\alpha \succ \gamma + \alpha_0} G(\gamma,\alpha) c_\alpha(\beta).
\end{align*}
Suppose $\alpha \prec \gamma + \alpha_0$. If $\alpha \not \geq \alpha_0$, then $c_\alpha(\beta) = 0$. 
Otherwise, \mbox{$G(\gamma,\alpha) = 0$} by construction of $\alpha_0$. Thus,
\[
\sum_{\substack{\alpha \prec \gamma + \alpha_0}} G(\gamma,\alpha) c_\alpha(\beta) = 0.
\]
The definition of $c_{\gamma + \alpha_0}(\beta)$
finishes the claim due to the following.
$$
G(\gamma,\gamma + \alpha_0) c_{\gamma+\alpha_0}(\beta) = \delta(\gamma,\beta) - \sum_{\alpha \succ \gamma + \alpha_0} G(\gamma,\alpha) c_\alpha(\beta).
$$\end{proof}

With this right inverse, one obtains the following.

\begin{theorem}\label{thm: saturation}
Let $I,J\subseteq R$ be $M$-graded ideals,
$g\in R$ be an $M$-homogeneous polynomial, and $m\in M$.
Suppose that $J = \langle g_1,\dots,g_t\rangle$
where each $g_i$ is $M$-homogeneous.
\begin{enumerate}
    \item $\Phi_g(D_0^{m+\deg g}(I)) = \Phi_g(D_0^{m+\deg g}(I \cap \langle g \rangle)) = D_0^m (I:g)$.
    \item $\sum_{i =1}^t \Phi_{g_i}(D_0^{m+\deg g_i}(I))
    = \sum_{i = 1}^t D_0^m(I:g_i) =     D_0^m(I:J).$
\end{enumerate}
\end{theorem}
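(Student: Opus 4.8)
The plan is to derive Theorem~\ref{thm: saturation} from the machinery already assembled, using Lemma~\ref{lemma:inverse} to replace the explicit anti-differentiation map $\Psi_i$ from the proof of Prop.~\ref{prop:QuotientVariable} with the general right inverse $\Psi_g$, and then to read off part~(2) as a formal consequence of part~(1) together with \eqref{eq:DualQuotientSum}.

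For part~(1), I would mimic the three-inclusion structure of the proof of Prop.~\ref{prop:QuotientVariable} verbatim, substituting $\Phi_g$ for $\Phi_i$, $\Psi_g$ for $\Psi_i$, and the degree shift $\deg g$ for $Ae_i$. First, take $\partial \in D_0^{m+\deg g}(I)$ and $h \in I:g$; since $gh \in I$, the defining property $\Phi_g(\partial)(h) = \partial(gh) = 0$ shows $\Phi_g(D_0^{m+\deg g}(I)) \subseteq D_0^m(I:g)$ (the degree bookkeeping being exactly the computation displayed just before Lemma~\ref{lemma:inverse}: $\Phi_g$ lowers degree by $\deg g$). Second, take $\delta \in D_0^m(I:g)$ and $f \in I \cap \langle g\rangle$, set $h = f/g \in I:g$, and use $\Phi_g \circ \Psi_g = \mathrm{id}$ from Lemma~\ref{lemma:inverse} to get $\Psi_g(\delta)(f) = \Psi_g(\delta)(gh) = \Phi_g(\Psi_g(\delta))(h) = \delta(h) = 0$, so $\Psi_g(\delta) \in D_0^{m+\deg g}(I \cap \langle g\rangle)$ and hence $\delta \in \Phi_g(D_0^{m+\deg g}(I\cap\langle g\rangle))$. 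Third, for $\partial \in D_0^{m+\deg g}(I\cap\langle g\rangle)$ and $f \in I$, note $gf \in I \cap \langle g\rangle$, so $\Phi_g(\partial)(f) = \partial(gf) = 0$, giving $\Phi_g(D_0^{m+\deg g}(I\cap\langle g\rangle)) \subseteq \Phi_g(D_0^{m+\deg g}(I))$. Chaining the three inclusions closes the cycle and yields the chain of equalities. One point that needs a sentence of care: $\Psi_g(\delta)$ must actually land in the graded piece $D_0^{m+\deg g}$ rather than merely in $D_0$; this follows because $\Phi_g$ is homogeneous of degree $-\deg g$ and $\Phi_g(\Psi_g(\delta)) = \delta$ is homogeneous of degree $m$, so one may replace $\Psi_g(\delta)$ by its degree-$(m+\deg g)$ component without changing the image under $\Phi_g$ — or one observes directly from the formula for $\Psi_g$ that it is homogeneous of the right degree.

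For part~(2), apply part~(1) to each generator $g_i$ to get $\Phi_{g_i}(D_0^{m+\deg g_i}(I)) = D_0^m(I:g_i)$ for every $i$, then sum over $i$; the equality $\sum_{i=1}^t D_0^m(I:g_i) = D_0^m(I:J)$ is exactly \eqref{eq:DualQuotientSum}, which was itself obtained from $I:J = \bigcap_i I:g_i$ and Cor.~\ref{cor: basic ops}. So part~(2) is purely a matter of assembling earlier statements.

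The main obstacle I anticipate is not any inclusion but the homogeneity subtlety flagged above: Lemma~\ref{lemma:inverse} as stated produces a right inverse $\Psi_g$ on all of $D_0$ with no explicit claim that it respects the grading, yet Theorem~\ref{thm: saturation}(1) is a statement about a specific graded component. I would resolve this cleanly by noting that $\Phi_g$ maps $D_0^{m+\deg g}$ to $D_0^m$ and is surjective onto the relevant subspace once restricted, so it suffices to post-compose $\Psi_g$ with the projection $D_0 \to D_0^{m+\deg g}$; since $\Phi_g$ kills every other graded component, $\Phi_g$ composed with this modified $\Psi_g$ is still the identity on $D_0^m$. Everything else is a transcription of the Prop.~\ref{prop:QuotientVariable} argument with $x_i$ replaced by $g$.
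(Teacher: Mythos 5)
Your overall route is the one the paper takes: its proof of Theorem~\ref{thm: saturation} is literally a pointer to the argument of Prop.~\ref{prop:QuotientVariable} with $\Psi_i$ replaced by the right inverse $\Psi_g$ from Lemma~\ref{lemma:inverse}, plus \eqref{eq:DualQuotientSum} for part~(2), and that is exactly what you execute. Your care about the grading of $\Psi_g$ is a genuine improvement over what is printed: either observation you offer works (homogeneity of $\Psi_g$ can be read off the recursion for $c_\alpha(\beta)$ because $g_\mu\neq 0$ forces $A\mu=\deg g$, or one projects onto the degree-$(m+\deg g)$ component), with the small addendum that the projected component still annihilates $I\cap\langle g\rangle$ because that ideal is $M$-graded, so Thm.~\ref{thm:MhomDualBasis} applies componentwise.

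There is, however, a genuine gap in your third inclusion, one you inherit faithfully from the printed proof of Prop.~\ref{prop:QuotientVariable}. For $\partial\in D_0^{m+\deg g}(I\cap\langle g\rangle)$ and $f\in I$, the computation $\Phi_g(\partial)(f)=\partial(gf)=0$ shows that $\Phi_g(\partial)$ annihilates $I$, i.e., $\Phi_g(\partial)\in D_0^{m}(I)$. It does not exhibit a preimage of $\Phi_g(\partial)$ lying in $D_0^{m+\deg g}(I)$, which is what membership in $\Phi_g(D_0^{m+\deg g}(I))$ requires, and $D_0^m(I)$ is in general strictly larger than $D_0^m(I:g)$. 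So the three inclusions actually established are
$\Phi_g(D_0^{m+\deg g}(I))\subseteq D_0^m(I:g)\subseteq \Phi_g(D_0^{m+\deg g}(I\cap\langle g\rangle))\subseteq D_0^m(I)$,
which do not close into a cycle of equalities. The repair is short: by Cor.~\ref{cor: basic ops}(3), $D_0^{m+\deg g}(I\cap\langle g\rangle)=D_0^{m+\deg g}(I)+D_0^{m+\deg g}(\langle g\rangle)$, and $\Phi_g$ kills $D_0(\langle g\rangle)$ outright, since $\partial(\langle g\rangle)=0$ gives $\Phi_g(\partial)(f)=\partial(gf)=0$ for every $f\in R$; hence $\Phi_g(D_0^{m+\deg g}(I\cap\langle g\rangle))=\Phi_g(D_0^{m+\deg g}(I))$, and combining this equality with your first and second inclusions yields the full chain. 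Part~(2) is correctly reduced to part~(1) and \eqref{eq:DualQuotientSum}, exactly as in the paper.
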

\begin{proof}
The first follows a similar approach
as the proof of Prop.~\ref{prop:QuotientVariable}
using Lemma~\ref{lemma:inverse}.
The second follows from the first and~\eqref{eq:DualQuotientSum}.
\end{proof}

\begin{example}\label{ex:Quotient}
Consider computing $J:I$
where $I$ is from Ex.~\ref{ex:GO}
and $J$ is from Ex.~\ref{ex:MembershipTest}.
Let $f_1$ and $f_2$ as in Ex.~\ref{ex:GO2}
be generators for $I$
with the Macaulay dual space for $J$ provided
in \eqref{eq:Membershipbasis}.
Since $\deg f_1 = 3$, one only needs to compute
$$\Phi_{f_1}\left(\partial_{(1,1)}+\partial_{(3,0)}\right) = 
\Phi_{f_1}\left(\partial_{(1,1)}\right) + 
\Phi_{f_1}\left(\partial_{(3,0)}\right)
= -3/16 \partial_{(0,0)}$$
to see that $D_0(J:f_1) = \mathrm{span}_{\Cc}\{\partial_{(0,0)}\}$.
Now, since $\deg f_2 = 2$, we start with
$$\Phi_{f_2}\left(\partial_{(0,1)}+\partial_{(2,0)}\right) =
\partial_{(0,0)}-\partial_{(0,0)}=0
$$
so that $D_0^0(J:f_2) = \{0\}$.  For completeness,
one can verify that
$$\Phi_{f_2}\left(\partial_{(1,1)}+\partial_{(3,0)}\right) =
\partial_{(1,0)}-\partial_{(1,0)}=0.
$$
Hence, $D_0(J:f_2) = \{0\}$ which was expected
since $f_2\in J$ yields $J:f_2 = \langle 1 \rangle$.
Therefore,
$$D_0(J:I) = \mathrm{span}_{\Cc}\{\partial_{(0,0)}\}$$
which corresponds with $J:I = \langle x_1, x_2\rangle$.
\end{example}

One can repeatedly compute ideal quotients,
say $I:J$, $(I:J):J$, $((I:J):J):J$, \dots,
which are denoted $I:J$, $I:J^2$, $I:J^3$, \dots.
This sequence stabilizes after finitely many
terms and is equal to the {\em saturation of $I$ with respect to $J$}, namely
$$I : J^\infty = \{f\in R ~:~ f\cdot J^n \subseteq I \text{ for some }n\geq 1\}.$$
In particular, $I : J^p = I : J^{p+1}$ if and only if $I:J^p = I:J^\infty$.
Saturation is useful, for example, to compute
information regarding a non-homogeneous ideal
by homogenizing and saturating away the component
at infinity.

\section{Algorithm and Software}\label{sec:Algorithm}

The results from Sec.~\ref{sec: multi-graded macaulay dual spaces} 
and~\ref{sec:IdealOperations}
lead to algorithms for computing
multi-graded Macaulay dual spaces
as summarized in the following.
Our proof-of-concept implementation
using {\tt Macaulay2}~\cite{M2} 
is available at
\url{https://doi.org/10.7274/j098z894548}.

In an effort to simplify our procedures
and implementation, 
we assume 
that the multi-grading is a 
\mbox{$\Zz^k$-grading}. Moreover, we assume that that, for every $m \in \Zz^k$, 
$R_m$, the $m$-graded component of 
$R = \Cc[x_1,\dotsc,x_N]$ is a finite dimensional complex vector space. Additionally, we will only consider gradings that arise from a matrix $A \in \Zz^{k\times N}$ where $\deg(x_i)$ is the $i^\text{th}$ column of $A$. 
Since it useful for us to have a 
half-space description of 
the weight semigroup $\omega$
as defined in \eqref{eq:WeigthSemiGroup},
we assume there is a matrix $B \in \Zz^{p \times k}$ where the rows are the normal vectors of the half-spaces whose intersection is the weight cone $\omega_\Rr$, so
\[
    \omega_\Rr = \{y \in \Rr^k ~:~ By \geq 0\}
\]
and $\omega = \omega_\Rr \cap \Zz^k$,
i.e., $\omega$ is \textit{saturated}.
Given $m \in \Zz^k$, this enables one to quickly ascertain whether or not $m$ is contained in $\omega$ or not.

As stated in Sec.~\ref{sec: multi-graded macaulay dual spaces}, in order to compute $D_0^m(I)$ for some $m \in \Zz^k$, we first fix a total ordering on the elements on the set 
$$\omega_m = \{s \in \omega ~:~ s 
\preceq_\omega m\}.$$
Since $\omega$ is a saturated semigroup, the set $\omega_m$ can be realized as the lattice points in a polyhedron,
e.g., see Ex.~\ref{ex:Hirzebruch3}.
Hence, a lattice point $s \in \omega$ is less than $m$ in the partial order if and only if $B(m - s) \geq 0$ and $Bs \geq 0$.
Therefore, the set $\omega_m$ is exactly
\[
    \omega_m = \{s \in \Zz^k ~:~ Bm \geq Bs \geq 0\}.
\]

Our first procedure below details how to find a linear extension of the partial order $\prec_\omega$ on $\omega_m$. 
Note that the most expensive part of this procedure is in computing the lattice points in~$\omega_m$. Our implementation used methods from the {\tt Polyhedra} package \cite{PolyhedraSource}. 

\vspace{0.4cm}
\hrule

\begin{description}
    \item[Procedure $\mathrm{SortLatticePoints}(A,B,m)$]
    \item[Input] The matrix $A\in\Zz^{k\times N}$ where the $i^\text{th}$ column is $\deg(x_i)$. The matrix $B\in \Zz^{p\times k}$ where $\omega_\Rr = \{y ~:~ By \geq 0\}$. A lattice point $m \in \omega$.
    \item[Output] A total ordering of the lattice points in $\omega$ less than $m$ in the partial ordering.
    \item[Begin] \hfill 
    
    \begin{enumerate}
        \item Let $\mathrm{\textbf{Unsorted}} := \{s \in \omega ~:~ s \preceq_\omega m\} \setminus \{0\}$ be the non-zero lattice points in $\omega$ less than $m$ in the partial order. This list is the set of non-zero integral solutions, $s$, to the system of inequalities $Bm \geq Bs \geq 0$. Let $\mathrm{\textbf{Sorted}} := \{0\}$
        \item For every $s \in \mathrm{\textbf{Unsorted}}$, check for every $i = 1, \dotsc, N$ if
        \begin{enumerate}
            \item $s - Ae_i \in \mathrm{\textbf{Sorted}}$ or 
            \item $B(s - Ae_i) \not \geq 0$, so $s - Ae_i \notin \omega$.
        \end{enumerate}
        \item If one of (2a) or (2b) is true for every $i$, then add $s$ to \textbf{Sorted} and delete it from \textbf{Unsorted}.
        \item Repeat steps 2 and 3 until \textbf{Unsorted} is empty.
    \end{enumerate}
    \item[Return] \textbf{Sorted}
\end{description}
\hrule 
\vspace{0.4cm}

The following show the correctness of this procedure. 

\begin{lemma}\label{lemma:finite}
    The set $\omega_m$ is finite.
\end{lemma}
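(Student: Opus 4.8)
The plan is to show that $\omega_m = \{s \in \Zz^k : Bm \geq Bs \geq 0\}$ is the set of lattice points in a bounded polyhedron, hence finite. The key facts available are: the weight cone $\omega_\Rr = \{y \in \Rr^k : By \geq 0\}$ is a \emph{pointed} polyhedral cone (stated in the text just after the definition of the weight semigroup, as a consequence of the assumption $\dim_\Cc(R_0) = 1$), and $\omega = \omega_\Rr \cap \Zz^k$ is saturated. First I would recall that, by the identification given in the excerpt, $\omega_m$ equals the set of integer points $s$ satisfying the finite system of inequalities $Bs \geq 0$ and $B(m-s) \geq 0$; geometrically this is $\omega_\Rr \cap (m - \omega_\Rr)$ intersected with $\Zz^k$. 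So it suffices to show the real polyhedron $P := \omega_\Rr \cap (m - \omega_\Rr)$ is bounded, since a bounded polyhedron contains only finitely many lattice points.

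The main step is boundedness of $P$. I would argue by contradiction: if $P$ were unbounded, its recession cone would be nonzero, and the recession cone of $\omega_\Rr \cap (m - \omega_\Rr)$ is $\omega_\Rr \cap (-\omega_\Rr)$ (the recession cone of a translate $m - \omega_\Rr$ is $-\omega_\Rr$, and the recession cone of an intersection is the intersection of the recession cones). But $\omega_\Rr$ is pointed, which by definition means $\omega_\Rr \cap (-\omega_\Rr) = \{0\}$. Hence the recession cone of $P$ is trivial, so $P$ is bounded. Alternatively, and perhaps more transparently for the reader, I would invoke the equivalent characterization already spelled out in the paper: pointedness of $\omega_\Rr$ is equivalent to the statement that $a, -a \in \omega$ forces $a = 0$ (used in the proof that $\preceq_\omega$ is a partial order). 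From this one directly sees that $0 \preceq_\omega s \preceq_\omega m$ pins $s$ inside a finite interval of the poset $(\omega, \preceq_\omega)$, and finiteness of that interval is exactly the boundedness claim.

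The main obstacle — really the only non-routine point — is making precise why pointedness of the cone rules out infinitely many lattice points, i.e., the recession-cone computation and the standard fact that a polyhedron is bounded iff its recession cone is $\{0\}$. I would cite a standard reference on polyhedra (the paper already uses \cite{ToricVarietiesCLS} and \cite{PolyhedraSource}) rather than reprove it. Everything else — translating between the inequality description, the semigroup description, and the lattice-point-in-polyhedron description — is bookkeeping that the excerpt has essentially already carried out, so the write-up should be short.
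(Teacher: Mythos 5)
Your proposal is correct and follows essentially the same route as the paper: both realize $\omega_m$ as the lattice points of the polyhedron $\omega_\Rr \cap (m - \omega_\Rr)$ and deduce boundedness from pointedness of $\omega_\Rr$. The paper simply unwinds your recession-cone argument by hand, taking an explicit ray $s + \lambda v$ and showing the inequalities $B(m-s) \geq \lambda Bv \geq -Bs$ force $Bv = 0$.
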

\begin{proof}
     Note that $\omega_m$ is the set of lattice points in the polyhedron $\omega_\Rr \cap (m - \omega_\Rr)$. Our assumption that $R_0 = \Cc$ implies that $\omega_\Rr$ is a pointed polyhedral cone, i.e., $\mathrm{null}(B) = \omega \cap (-\omega) = \{0\}$. In order to show that $\omega_m$ is finite, it is enough to show that $\omega_\Rr \cap (m - \omega_\Rr)$ is bounded.  We show this via contradiction.

     Suppose $\omega_\Rr \cap (m - \omega_\Rr)$ is unbounded. Then, there must exist $s \in \omega_\Rr \cap (m - \omega_\Rr)$ and a $v \in \Rr^k \setminus\{0\}$ so that for every $\lambda \geq 0$, $s + \lambda v \in \omega_\Rr \cap (m - \omega_\Rr)$. Since $s + \lambda v \in \omega_\Rr$, we have 
     \[
     B (s + \lambda v) \geq 0,
     \]
     and since $s + \lambda v \in m - \omega_\Rr$, we have
     \[
     B(m - s - \lambda v) \geq 0
     \]
     for all $\lambda \geq 0$. Solving each inequality for $\lambda Bv$ yields the following
     \[
     B(m-s) \geq \lambda Bv \geq -Bs
     \]
     for every $\lambda \geq 0$. Since $s,m,v,$ and $B$ are all fixed, the only way this holds true is if $Bv = 0$. This, however, contradicts that $\omega_\Rr$ is pointed; hence, $\omega_\Rr \cap (m - \omega_\Rr)$ is bounded.
\end{proof}

\begin{theorem}
    The procedure SortLatticePoints terminates and the output is a linear extension of the partial order $\preceq_\omega$.
\end{theorem}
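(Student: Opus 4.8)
The plan is to prove two things separately: termination of SortLatticePoints, and correctness of its output as a linear extension of $\preceq_\omega$.

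First I would handle termination. By Lemma~\ref{lemma:finite}, the set $\omega_m$ is finite, so \textbf{Unsorted} starts as a finite list. It therefore suffices to show that each pass through steps~2--3 moves at least one element from \textbf{Unsorted} to \textbf{Sorted}, i.e., that the loop never stalls with a nonempty \textbf{Unsorted}. To see this, suppose \textbf{Unsorted} is nonempty and pick $s \in \textbf{Unsorted}$ that is minimal among the elements of \textbf{Unsorted} with respect to $\preceq_\omega$ (such a minimal element exists since $\omega_m$ is finite and $\preceq_\omega$ is a partial order by the earlier proposition). I claim $s$ satisfies condition (2a) or (2b) for every $i = 1,\dots,N$. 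Fix $i$. If $s - Ae_i \notin \omega$, then $B(s - Ae_i) \not\geq 0$, so (2b) holds. Otherwise $s - Ae_i \in \omega$, and since $Ae_i = \deg(x_i) \in \omega$ we have $s - Ae_i \preceq_\omega s \preceq_\omega m$, so $s - Ae_i \in \omega_m$. It is strictly below $s$ (as $Ae_i \neq 0$ because $R_0 = \Cc$ forces every variable to have nonzero degree), so by minimality of $s$ it cannot be in \textbf{Unsorted}; hence it is in \textbf{Sorted}, giving (2a). Therefore $s$ gets added to \textbf{Sorted}, the loop makes progress, and after finitely many passes \textbf{Unsorted} is empty.

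Next I would establish correctness of the output ordering. Let $<$ denote the order in which elements are appended to \textbf{Sorted} (with $0$ first). I must show: (i) $<$ is a total order on $\omega_m$, and (ii) it extends $\preceq_\omega$, i.e., if $s \preceq_\omega t$ with $s \neq t$ and both in $\omega_m$, then $s$ is appended before $t$. Totality is immediate since every element of $\omega_m$ is eventually appended exactly once and the append order is a sequence, hence linearly ordered. For (ii), the key observation is that when an element $t$ is appended, conditions (2a)/(2b) held for it, and in particular for each $i$ with $t - Ae_i \in \omega$ (equivalently $t - Ae_i \in \omega_m$, by the argument above), condition (2a) says $t - Ae_i$ is \emph{already} in \textbf{Sorted}. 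So every immediate predecessor of $t$ in the covering relation of $\preceq_\omega$ restricted to $\omega_m$ is appended before $t$. Since $\preceq_\omega$ on the finite set $\omega_m$ is the transitive closure of these covering relations --- any $s \preceq_\omega t$ with $s \neq t$ is connected to $t$ by a chain $s = u_0 \prec u_1 \prec \cdots \prec u_r = t$ where each step subtracts some $Ae_i$, because $t - s \in \omega$ is a nonnegative integer combination of the generators $Ae_1,\dots,Ae_N$ of $\omega$ --- a straightforward induction along such a chain shows $s$ is appended before $t$. This gives (ii).

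The main obstacle I anticipate is the bookkeeping in step~(ii): one must be careful that ``every immediate predecessor is already in \textbf{Sorted} at append time'' really does propagate down arbitrary $\preceq_\omega$-chains inside $\omega_m$, and in particular that the intermediate points $u_j$ of the chain all lie in $\omega_m$ (so that they too are processed by the procedure). This follows because $0 \preceq_\omega u_j \preceq_\omega t \preceq_\omega m$ forces $u_j \in \omega_m$, but it is worth stating explicitly. The other minor point to nail down is that $\omega$ is generated as a semigroup by $\{Ae_1,\dots,Ae_N\}$, so that the difference $t-s$ of two comparable elements decomposes into single-variable steps; this is exactly the statement that $R_m \neq 0$ iff $m$ is a nonnegative integer combination of the $\deg(x_i)$, which is immediate from the monomial basis of $R_m$.
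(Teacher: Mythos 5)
Your proof is correct and follows essentially the same route as the paper's: finiteness of $\omega_m$ from Lemma~\ref{lemma:finite} gives termination, and correctness comes from the fact that the covering relations in $\omega_m$ are given by subtracting the $Ae_i$, plus induction along chains. In fact your write-up is more complete than the paper's, which simply asserts that each pass sorts at least one element, whereas your $\preceq_\omega$-minimal-element argument actually proves the loop cannot stall, and you also make explicit the needed facts that $\omega$ is generated by the $\deg(x_i)$ and that intermediate points of a chain stay in $\omega_m$.
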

\begin{proof}
    Since, at each step in the procedure, at least one element is sorted.
    Finiteness from Lemma~\ref{lemma:finite}
    yields that this procedure must
    terminate in finitely many steps.
    For the second claim, suppose we are at the step in the procedure where we are about to add $s$ to \textbf{Sorted}. The elements $t\in \omega$ which are covered by $s$ are all of the form $s - Ae_j$ for some $j \in \{1,\dotsc,N\}$. Therefore, by induction, when we sort $s$, we are guaranteeing that all elements less than $s$ in the partial order have already been sorted and that no elements greater than $s$ have been sorted yielding a linear extension.
\end{proof}

Our second procedure below computes $D_0^m(I)$ by utilizing the closedness subspace condition. The correctness of this procedure is the content of Cor.~\ref{cor: dSpace membership}
and illustrated in Ex.~\ref{ex:Hirzebruch3}.

\vspace{0.4cm}
\hrule
\begin{description}
    \item[Procedure $\mathrm{DualSpace}(m,I,A,B)$]
    \item[Input] 
    The matrix $A\in\Zz^{k\times N}$ where the $i^{\rm th}$ column is $\deg(x_i)$.  
    A lattice point $m \in \omega$. 
    A $\Zz^k$-graded ideal $I = \langle f_1,\dotsc,f_\ell\rangle$ with $\deg(f_i) = d_i$.
The matrix $B\in \Zz^{p\times k}$ with \mbox{$\omega_\Rr = \{y ~:~ By \geq 0\}$}.
    \item[Output] A basis for $D_0^m(I)$.
    \item[Begin] \hfill
    \begin{enumerate}
        \item Sort the lattice points in $\omega$ less than or equal to $m$, say $\{s_1,\dotsc,s_r\} := \mathrm{SortLatticePoints}(A,B,m)$ where $s_1 = 0$ and $s_r = m$.
        \item Set $C_0^0(I) := \mathrm{span}_\Cc \{\partial_1\}$ and $D_0^0(I) = C_0^0(I)$.
        \item For $i$ from 2 to $r$ do 
        \begin{enumerate}
            \item Compute a basis of $C_0^{s_i}(I) = \bigcap_{j=1}^N\Phi_j^{-1}(D_0^{s_i - Ae_j}(I))$.
            \item Impose the linear conditions
            that $\partial(f_j)=0$
            on the basis for 
            $C_0^{s_i}(I)$
            for $j=1,\dots,t$
            to compute a basis
            for 
            \[
            D_0^{s_i} = \{\partial \in C_0^{s_i}(I) ~:~ \partial(f_j) = 0 \text{ for } 1 \leq j \leq t\}
            \]
        \end{enumerate}
    \end{enumerate}
    \item[Return] a basis for $D_0^{s_r}(I)=D_0^{m}(I)$
\end{description}
\hrule

%
%

\section{Examples}\label{sec:Examples}

The following three examples
were computed using 
our {\tt Macaulay2} implementation 
described in Sec.~\ref{sec:Algorithm}.
Since our implementation is a proof-of-concept, 
it is not yet competitive with highly researched and optimized
Gr{\"o}bner basis methods.  However, as mentioned in the Introduction,
one advantage of a dual space approach is that one can start computing 
dual spaces immediately up to a given degree which would be particularly 
useful for problems in which computing a Gr{\"o}bner basis
is computationally more difficult than the following examples. 
See Sec.~\ref{sec:Conclusion} for comments regarding future research directions
including improved efficiency and incorporating parallel linear algebra routines. 

\subsection{Hirzebruch surface}

Examples~\ref{ex:Hirzebruch},
~\ref{ex:Hirzebruch2},
and~\ref{ex:Hirzebruch3} 
consider aspects of the Hirzebruch surface.
The following considers $\mathcal{H}_2$,
which is a smooth projective toric surface. The Cox ring 
is a polynomial ring $R = \Cc[x_1,x_2,x_3,x_4]$
which is graded by the Picard group,
namely $\Zz^2$. The degree of each $x_i$ is given by the equivalence class of $e_i$ in the cokernel of the transpose of 
    \[
        \begin{pmatrix}
            -1 & 0 & 1 &  0 \\
            2 & 1 & 0 & -1
        \end{pmatrix}.
    \]
After choosing a basis, 
$\deg(x_i) \in \Zz^2$ is given by the $i^\text{th}$ column of
    \[
        A = \begin{pmatrix}
            1  & 0 & 1 & 0 \\
            -2 & 1 & 0 & 1
            \end{pmatrix}
    \]
which corresponds with the $r=2$ case
in Ex.~\ref{ex:Hirzebruch2}.

Consider $f = x_1^2 x_2^6 + x_1^2 x_2^3 x_4^3 - x_3^2 x_4^2$
which is irreducible with $\deg f = (2,2)$.  Let $I = \langle f\rangle$.
By the toric ideal-variety 
correspondence~\cite[Prop.~5.2.4]{ToricVarietiesCLS}, $I$ cuts out an irreducible curve $C \subset \mathcal{H}_2$.

 The values from $(0,0)$ to $(4,4)$ of the multi-graded Hilbert function $H_I(i,j)$ are given in the table below. A dash is put in position $(i,j)$ if $(i,j) \succ (4,4)$ or $H_I(i,j) = 0$. 
    \[
    \hbox{\small
    \begin{tabular}{c|ccccc}
       \diagbox{$j$}{$i$} & 0 & 1 & 2 & 3 & 4 \\ \hline
       12&  13&-&-&-&-\\
       11&  12&-&-&-&-\\
       10&  11&24&-&-&-\\
       9&   10&22&-&-&-\\
       8&   9&20&26&-&-\\
       7&   8&18&24&-&-\\
       6&   7&16&22&28&-\\
       5&   6&14&20&26&-\\
       4&   5&12&18&24&30\\
       3&   4&10&16&22&28\\
       2&   3&8&14&20&26\\
       1&   2&6&12&18&24\\
       0&   1&4&9&15&21\\
       -1&  -&2&6&12&18\\
       -2&  -&1&4&9&15\\
       -3&  -&-&2&6&12\\
       -4&  -&-&1&4&9\\
       -5&  -&-&-&2&6\\
       -6&  -&-&-&1&4\\
       -7&  -&-&-&-&2\\
       -8&  -&-&-&-&1
       \end{tabular}}
       \]

Since the class of $(1,1)$ in $\mathrm{Pic}(\mathcal{H}_2)$ is very ample, we can embed~$C$ in $\Pp^5$ 
via this divisor. 
By looking at the values $H_I(a,a)$ for $a \geq 2$, we see that the Hilbert polynomial of $C \subseteq \Pp^5$ is $8a - 2$ from which we conclude that $C$ has degree 8 and arithmetic genus 3.

\subsection{Parameter geography}

In \cite{ParameterGeography}, the authors study the following 
parameterized system $\Phi_1(u,v;\sigma)= \Phi_2(u,v;\sigma) = 0$
where 
$\theta_1,\dotsc,\theta_8$
are taking to be generic and $\zeta = 1$:
$$\begin{array}{c}
\hbox{\small $\Phi_1(u,v;\sigma)$}= 
\\
\hbox{\small $
\theta_1 v^2 + \zeta uv + \theta_2 \zeta^2 u^2 + (\theta_1\theta_3 - \theta_1\theta_3\sigma - \theta_1 + \theta_7 \zeta) uv^2$}\\
\hbox{\small $+(\theta_4\zeta - \theta_4\zeta\sigma - \zeta + \theta_2\theta_8\zeta^2)u^2v $}\\
\hbox{\small $+(\theta_2\theta_5\zeta^2 - \theta_2\theta_5\zeta^2\sigma - \theta_2\zeta^2)u^3 + \theta_1\theta_6v^3 -(\theta_1\theta_3 + \theta_7\zeta)u^2v^2 $}\\
\hbox{\small $- (\theta_4\zeta + \theta_2\theta_8\zeta^2)u^3v-\theta_2\theta_5\zeta^2u^4 - \theta_1\theta_6 uv^3$} \\[0.01in]
\hbox{\small $\Phi_2(u,v;\sigma) = $}\\
\hbox{\small $\theta_1v^2 + \zeta uv + \theta_2\zeta^2u^2 + (\theta_1\theta_6 - \theta_1\theta_6 - \theta_1)v^3 $}\\
\hbox{\small $+(\theta_7\zeta - \theta_7\zeta\sigma - \zeta + \theta_1\theta_3)uv^2 $}\\
\hbox{\small $+(\theta_2\theta_8\zeta^2 - \theta_2\theta_8\zeta^2\sigma - \theta_2\zeta^2 + \theta_4\zeta)u^2v + \theta_2\theta_5\zeta^2u^3 $}\\
\hbox{\small $-(\theta_1\theta_3 + \theta_7\zeta)uv^3 -(\theta_4\zeta + \theta_2\theta_8\zeta^2)u^2v^2 
-\theta_2\theta_5\zeta^2u^3v - \theta_1\theta_6v^4$} \end{array}$$
Consider homogenizing by adding $\tau$ and $w$ to consider the polynomial ring $\Cc[\sigma,\tau, u,v,w]$ where $\deg \sigma = \deg \tau = (1,0)$ and $\deg u = \deg v = \deg w = (0,1)$.
This yields a $\Zz^2$-graded ideal with 2 generators and we view the zero locus of this system as a reducible curve in $\Pp^1 \times \Pp^2$. 
After slicing this system with a generic linear form of degree $(0,1)$, 
the following table lists multi-graded Hilbert function up to $(10,10)$ computed via Macaulay dual spaces. 
  \[
  \hbox{\small
\begin{tabular}{c|ccccccccccc}
    \diagbox{$i$}{$j$} &0 & 1 & 2 &3&4&5&6&7&8&9&10 \\ \hline
      0&1&2&3&4&5&6&7&8&9&10&11\\
      1&2&4&6&8&8&8&8&8&8&8&8\\
      2&3&6&9&12&11&10&9&8&8&8&8\\
      3&4&8&12&16&14&12&10&8&8&8&8\\
      4&5&10&15&20&17&14&11&8&8&8&8\\
      5&6&12&18&24&20&16&12&8&8&8&8\\
      6&7&14&21&28&23&18&13&8&8&8&8\\
      7&8&16&24&32&26&20&14&8&8&8&8\\
      8&9&18&27&36&29&22&15&8&8&8&8\\
      9&10&20&30&40&32&24&16&8&8&8&8\\
      10&11&22&33&44&35&26&17&8&8&8&8
\end{tabular}}
 \]

Using Macaulay dual spaces,
we saturated away the components
lying along coordinate axes
resulting in the following
multi-graded Hilbert function.
\[\hbox{\small
\begin{tabular}{c|ccccccc}
\diagbox{$i$}{$j$}&0&1&2&3&4&5&6\\ \hline
      0&1&2&3&4&5&6&7\\
      1&2&4&6&7&7&7&7\\
      2&3&6&8&7&7&7&7\\
      3&4&8&9&7&7&7&7\\
      4&5&10&10&7&7&7&7\\
      5&6&12&11&7&7&7&7\\
      6&7&14&12&7&7&7&7
\end{tabular}}
\]
Although this table provides information
when viewed as a subvariety of $\Pp^1 \times \Pp^2$,
the Hilbert function of the system viewed in $\Pp^4$ via a Segre product 
is exactly $H_I(i,i)$. 
Hence, since the values along the main diagonal stabilize at $7$, 
there are 7 non-zero complex solutions to this system for a generic choice of $\sigma$ matching the results in \cite{ParameterGeography}.

\subsection{Chemical reaction network}

The final example considers
a chemical reaction network known as the one-site phosphorylation cycle \cite{GrossElizabeth2021Tsda}. The \textit{steady-state degree} of this chemical reaction network is the number of complex solutions to the following system for generic parameters $c_A$ and $k_{ij}$.
\vspace{-0.05in}
\begin{align*}
    \hbox{\small $f_1$}&\hbox{\small $= x_E + x_{X_1} - c_E - c_{X_1}$}\\
    \hbox{\small $f_2$}&\hbox{\small $= x_F + x_{Y_1} - c_F - c_{Y_1}$}\\
    \hbox{\small $f_3$}&\hbox{\small $= x_{S_0} + x_{S_1} - x_E - x_F - c_{S_0} - c_{S_1} + c_E + c_F$}\\
    \hbox{\small $f_4$}&\hbox{\small $= -k_{01}x_{S_0} x_E + k_{10}x_{X_1} + k_{45}x_{Y_1}$}\\
    \hbox{\small $f_5$}&\hbox{\small $= -k_{34}x_{S_1} x_F + k_{12}x_{X_1} + k_{43}x_{Y_1}$}\\
    \hbox{\small $f_6$}&\hbox{\small $= k_{01}x_{S_0} x_E - (k_{10} + k_{12})x_{X_1}$}\\
    \hbox{\small $f_7$}&\hbox{\small $= k_{34}x_{S_1} x_F - (k_{43} + k_{45})x_{Y_1}$}\\[-0.2in]
\end{align*}
One way to compute the steady-state 
degree is to homogenize 
with respect to a new variable $t$,
saturate away the hyperplane at infinity,
and compute the degree of the resulting
projective variety. 
Letting~$I$ be the ideal generated
by the homogenization with respect to $t$ of $f_1,\dots,f_7$,
one obtains the following
using Macaulay dual spaces.
\[\hbox{\small
    \begin{tabular}{c|ccccccccccc}
        $k$           & 0 & 1 & 2 & 3 & 4 & 5 & 6 & 7 & 8 & 9 & 10 \\ \hline
        $H_I(k)$      & 1 & 4 & 7 & 8 & 8 & 8 & 8 & 8 & 8 & 8 & 8 \\
        $H_{I:t}(k)$  & 1 & 3 & 4 & 4 & 4 & 4 & 4 & 4 & 4 & 4 & - \\
        $H_{I:t^2}(k)$& 1 & 3 & 3 & 3 & 3 & 3 & 3 & 3 & 3 & - & - \\
        $H_{I:t^3}(k)$& 1 & 3 & 3 & 3 & 3 & 3 & 3 & 3 & - & - & -
    \end{tabular}
}\]
Since $H_{I:t^2} = H_{I:t^3}$, we can conclude that $H_{I:t^2} = H_{I:t^\infty}$. 
Hence, this computation shows
the steady-state degree is $3$
in agreement with the results found in \cite{GrossElizabeth2021Tsda}.

\section{Conclusion}\label{sec:Conclusion}

Building on a key theoretical contribution in Thm.~\ref{thm:MhomDualBasis} which
shows that the Macaulay dual space of a multi-graded ideal
is multi-graded, algorithms are presented
for performing computations related to
such dual spaces
including using Thm.~\ref{thm: saturation}
which describes how to compute ideal quotients using dual spaces.
Using a proof-of-concept implementation
in {\tt Macaulay2}~\cite{M2},
ideal computations were performed
using multi-graded dual spaces on several different examples.

Some future research directions include
incorporating more efficient and parallel numerical linear algebra
routines into the implementation to improve the performance,
consider examples where obtainig a Gr{\"o}bner basis is computationally
more challenging, consider errors and stability when performing
numerical linear algebra routines with dual spaces,
and investigate the complexity of performing
computations using dual~spaces.

\nocite{*}
\bibliographystyle{acm}
\bibliography{ref}

\begin{thebibliography}{10}

\bibitem{NLDT}
{\sc Bates, D.~J., Hauenstein, J.~D., Peterson, C., and Sommese, A.~J.}
\newblock A numerical local dimension test for points on the solution set of a system of polynomial equations.
\newblock {\em SIAM Journal on Numerical Analysis 47}, 5 (2009), 3608--3623.

\bibitem{Batselier-Dreesen-DeMoor}
{\sc Batselier, K., Dreesen, P., and De~Moor, B.}
\newblock A fast recursive orthogonalization scheme for the {M}acaulay matrix.
\newblock {\em J. Comput. Appl. Math. 267\/} (2014), 20--32.

\bibitem{Bender-Faugere-Tsigaridas}
{\sc Bender, M.~R., Faug\`ere, J.-C., and Tsigaridas, E.}
\newblock Gr\"{o}bner basis over semigroup algebras: algorithms and applications for sparse polynomial systems.
\newblock In {\em I{SSAC}'19---{P}roceedings of the 2019 {ACM} {I}nternational {S}ymposium on {S}ymbolic and {A}lgebraic {C}omputation\/} (New York, 2019), ACM, pp.~42--49.

\bibitem{PolyhedraSource}
{\sc Birkner, R., and Kastner, L.}
\newblock {Polyhedra: convex polyhedra. Version~1.10}.
\newblock A \emph{Macaulay2} package available at \url{https://github.com/Macaulay2/M2/tree/master/M2/Macaulay2/packages}, 2009.

\bibitem{ToricVarietiesCLS}
{\sc Cox, D.~A., Little, J.~B., and Schenck, H.~K.}
\newblock {\em Toric varieties}, vol.~124 of {\em Graduate Studies in Mathematics}.
\newblock American Mathematical Society, Providence, RI, 2011.

\bibitem{MultipleZeros}
{\sc Dayton, B.~H., Li, T.-Y., and Zeng, Z.}
\newblock Multiple zeros of nonlinear systems.
\newblock {\em Math. Comp. 80}, 276 (2011), 2143--2168.

\bibitem{DZ05}
{\sc Dayton, B.~H., and Zeng, Z.}
\newblock Computing the multiplicity structure in solving polynomial systems.
\newblock In {\em Proceedings of the 2005 International Symposium on Symbolic and Algebraic Computation\/} (New York, NY, USA, 2005), ISSAC '05, Association for Computing Machinery, p.~116–123.

\bibitem{Faugere-Spaenlehauer-Svartz}
{\sc Faug\`ere, J.-C., Spaenlehauer, P.-J., and Svartz, J.}
\newblock Sparse {G}r\"{o}bner bases: the unmixed case.
\newblock In {\em I{SSAC} 2014---{P}roceedings of the 39th {I}nternational {S}ymposium on {S}ymbolic and {A}lgebraic {C}omputation\/} (New York, 2014), ACM, pp.~178--185.

\bibitem{M2}
{\sc Grayson, D.~R., and Stillman, M.~E.}
\newblock Macaulay2, a software system for research in algebraic geometry.
\newblock Available at \url{http://www.math.uiuc.edu/Macaulay2/}, 2022.

\bibitem{GOsystem}
{\sc Griewank, A., and Osborne, M.~R.}
\newblock Analysis of {N}ewton's method at irregular singularities.
\newblock {\em SIAM J. Numer. Anal. 20}, 4 (1983), 747--773.

\bibitem{GHPS14}
{\sc Griffin, Z.~A., Hauenstein, J.~D., Peterson, C., and Sommese, A.~J.}
\newblock Numerical computation of the {H}ilbert function and regularity of a zero dimensional scheme.
\newblock In {\em Connections between algebra, combinatorics, and geometry}, vol.~76 of {\em Springer Proc. Math. Stat.} Springer, New York, 2014, pp.~235--250.

\bibitem{GrossElizabeth2021Tsda}
{\sc Gross, E., and Hill, C.}
\newblock The steady-state degree and mixed volume of a chemical reaction network.
\newblock {\em Advances in applied mathematics 131\/} (2021), 102254.

\bibitem{HaoWenrui2013A9Aa}
{\sc Hao, W., Sommese, A., and Zeng, Z.}
\newblock Algorithm 931: An algorithm and software for computing multiplicity structures at zeros of nonlinear systems.
\newblock {\em ACM transactions on mathematical software 40}, 1 (2013), 1--16.

\bibitem{CounterExIdealMembership}
{\sc Hauenstein, J.~D.}
\newblock A counter example to an ideal membership test.
\newblock {\em Adv. Geom. 10}, 3 (2010), 557--559.

\bibitem{DeflationMult}
{\sc Hauenstein, J.~D., Mourrain, B., and Szanto, A.}
\newblock On deflation and multiplicity structure.
\newblock {\em J. Symbolic Comput. 83\/} (2017), 228--253.

\bibitem{LeykinNPD}
{\sc Leykin, A.}
\newblock Numerical primary decomposition.
\newblock In {\em I{SSAC} 2008}. ACM, New York, 2008, pp.~165--172.

\bibitem{MacaulayBook}
{\sc Macaulay, F.~S.}
\newblock {\em The algebraic theory of modular systems}.
\newblock Cambridge Mathematical Library. Cambridge University Press, Cambridge, 1994.
\newblock Revised reprint of the 1916 original, With an introduction by Paul Roberts.

\bibitem{MM11}
{\sc Mantzaflaris, A., and Mourrain, B.}
\newblock Deflation and certified isolation of singular zeros of polynomial systems.
\newblock In {\em Proceedings of the 36th International Symposium on Symbolic and Algebraic Computation\/} (New York, 2011), ISSAC '11, ACM, p.~249–256.

\bibitem{MillerSturmfels}
{\sc Miller, E., and Sturmfels, B.}
\newblock {\em Combinatorial commutative algebra}, vol.~227 of {\em Graduate Texts in Mathematics}.
\newblock Springer-Verlag, New York, 2005.

\bibitem{Mourrain-Telen-VanBarel}
{\sc Mourrain, B., Telen, S., and Van~Barel, M.}
\newblock Truncated normal forms for solving polynomial systems: generalized and efficient algorithms.
\newblock {\em J. Symbolic Comput. 102\/} (2021), 63--85.

\bibitem{ParameterGeography}
{\sc Nam, K.-M., Gyori, B.~M., Amethyst, S.~V., Bates, D.~J., and Gunawardena, J.}
\newblock Robustness and parameter geography in post-translational modification systems.
\newblock {\em PLoS computational biology 16}, 5 (2020), e1007573--e1007573.

\bibitem{StetterBook}
{\sc Stetter, H.~J.}
\newblock {\em Numerical Polynomial Algebra}.
\newblock Society for Industrial and Applied Mathematics, Philadelphia, PA, 2004.

\bibitem{ST98}
{\sc Stetter, H.~J., and Thallinger, G.~H.}
\newblock Singular systems of polynomials.
\newblock In {\em Proceedings of the 1998 {I}nternational {S}ymposium on {S}ymbolic and {A}lgebraic {C}omputation ({R}ostock)\/} (New York, 1998), ACM, pp.~9--16.

\bibitem{ClosednessCondition}
{\sc Zeng, Z.}
\newblock The closedness subspace method for computing the multiplicity structure of a polynomial system.
\newblock In {\em Interactions of classical and numerical algebraic geometry}, vol.~496 of {\em Contemp. Math.} Amer. Math. Soc., Providence, RI, 2009, pp.~347--362.

\end{thebibliography}

\end{document}